\newdimen\headwidth
\newdimen\headrulewidth\label{means}
\theoremstyle{plain}
\newtheorem{proposition}{Proposition}
\newtheorem{lemma}[proposition]{Lemma}
\newtheorem{theorem}[proposition]{Theorem}
\theoremstyle{definition}
\newtheorem{definition}[proposition]{Definition}
\newtheorem{example}[proposition]{Example}
\theoremstyle{remark}
\newtheorem{remark}[proposition]{Remark}
\numberwithin{proposition}{section}
\numberwithin{equation}{section}
\newcommand{\ds}{\displaystyle}
\newcommand{\bA}{\mathbb {A}}
\newcommand{\bC}{\mathbb {C}}
\newcommand{\bG}{\mathbb {G}}
\newcommand{\bQ}{\mathbb {Q}}
\newcommand{\bR}{\mathbb {R}}
\newcommand{\bZ}{\mathbb {Z}}
\newcommand{\tw}{\widetilde {w}}
\newcommand{\tM}{\widetilde {M}}
\newcommand{\tv}{\widetilde {v}}
\newcommand{\tT}{\widetilde {T}}
\newcommand{\tpi}{\widetilde {\pi}}
\newcommand{\ts}{\widetilde{\sigma}}
\newcommand{\tPi}{\widetilde {\Pi}}
\newcommand{\der}{\text{der}}
\newcommand{\End}{\text{End}}
\newcommand{\Hom}{\text{Hom}}
\newcommand{\Ind}{\text{Ind}}
\newcommand{\std}{\text{std}}
\newcommand{\Sym}{\text{Sym}}
\newcommand{\Spec}{\text{Spec}}
\newcommand{\DivB}{\text{Div}_B}
\newcommand{\oT}{\overline {T}}
\newcommand{\hG}{\hat{G}}
\newcommand{\hT}{\hat{T}}
\newcommand{\hL}{\hat{L}}
\newcommand{\hphi}{\hat{\phi}}
\newcommand{\hPhi}{\hat{\Phi}}
\newcommand{\sH}{\mathcal{H}}
\newcommand{\sS}{\mathcal{S}}
\newcommand{\sO}{\mathcal{O}}
\title[Resolution of Reductive Monoids and Multiplicativity]{On the Resolution of Reductive Monoids and Multiplicativity of $\gamma$-Factors}
\author{Freydoon Shahidi and William Sokurski}
\begin{document}
\date{}
\subjclass[2010]{11F70,  11F66, and 14E15}
\maketitle

\begin{abstract}
    In this article, we give a proof of multiplicativity for $\gamma$-factors, an equality of parabolically induced and inducing factors, in the context of the Braverman-Kazhdan/Ngo program, under the assumption of commutativity of the corresponding Fourier transforms and a certain generalized Harish-Chandra transform. Within our proof, we define a suitable space of Schwartz functions which we prove to contain the basic function. We also discuss the resolution of singularities and their rationality for reductive monoids, which are among the basic objects in the program.
\end{abstract}

\section*{Introduction}

Every theory of $L$-functions must satisfy the axiom of multiplicativity/inductivity, which simply requires that $\gamma$-factors for induced representations are equal to those of the inducing representations. This axiom is a theorem for Artin $L$-functions and those obtained from the Langlands-Shahidi method \cite{shahidi2010eisenstein}, and is a main tool in computing $\gamma$-factors, root numbers, and $L$-functions. On the other hand, its proof in the cases obtained from Rankin-Selberg methods are quite involved and complicated.  It is also central in proving equality of these factors when they are defined by different methods and in establishing the local Langlands correspondence (LLC) \cite{shahidi2012equality,shahidi2017local, harris2001geometry, henniart2000preuve, gan2011local, 10.1215/00127094-2017-0001} . Its importance as a technical tool in proving certain cases of functoriality \cite{cogdell2004functoriality,kim2003functoriality,kim2000functorial} is now well established.

In this paper we will provide a proof of multiplicativity for $\gamma$-factors defined by the method of Braverman-Kazhdan/Ngo \cite{braverman2002normalized,braverman2010gamma,bouthier2016formal,ngo2020hankel} and L. Lafforgue \cite{lafforgue2014noyaux} in general under the assumption that the $\rho$-Fourier transforms on the group $G$ and the inducing Levi subgroup $L$ commute with the $\rho$-Harish-Chandra transform, a generalized Satake transform sending $C_c^{\infty}(G(k)) \to C_c^{\infty}(L(k))$, where $\rho$ is a finite dimensional representation of the $L$-group of $G$ by means of which the $\gamma$-factors are defined.

Within our proof, we define a space $\sS^{\rho}(G)$ of $\rho$-Schwartz functions for every $\rho$ as 

\begin{equation}\label{def}
 \sS^{\rho}(G):=C_{c}^{\infty}(G(k)) + J^{\rho}(C^{\infty}_c(G(k))) \subset C^{\infty}(G(k)).
\end{equation}
This definition is crucial since the $\rho$-Schwartz functions defined in this way will be uniformly bi-$K$-finite (see equation (5.16) and Lemma $5.5$), making the descent to the inducing level possible, an important step in the proof of multiplicativity. While the $\gamma$-factor can be defined as the kernel of the Fourier transform, it is the full functional equation that allows our descent to the inducing level in a transparent fashion, using our definition of $\rho$-Schwartz functions.

In \cite{braverman2010gamma}, Braverman and Kazhdan defined their Schwartz space as a ''saturation" of ours. But our Schwartz space, which is denoted by $V_{\rho}$ in \cite{braverman2010gamma}, covers a significant part of theirs and in particular, contains the $\rho$-basic function as we prove in Proposition $5.3$ This is done using the extended Satake transform to almost compact functions \cite{li2017basic} and the fact that it commutes with the Fourier transform induced from tori which is now defined in general, cf. Section $6$ and in partiular diagram $(6.8)$.

The commutativity assumption allows us to extend the $\rho$-Harish-Chandra transform to $\sS^{\rho}(G)$, commuting with $J^{\rho}$ and $J^{\rho_L}$, respectively, where $\rho_L$ is the restriction of $\rho$ to the $L$-group of $L$. This construction of $\sS^{\rho}(G)$ agrees with that of Braverman-Kazhdan in the case of doubling method \cite{braverman2002normalized, gelbart2006explicit,li2018zeta,lapid2005local, piatetski1986varepsilon, shahidi2016generalized,jiang2020harmonic,getz2020refined}, since $G$ being the interior of the defining monoid embeds as a unique open orbit into the Braverman-Kazhdan space (cf. \cite{li2018zeta}). Our proof is a generalization of Godement-Jacquet for $GL_n$, Theorem 3.4 of \cite{godement2006zeta}.

One expects $\sS^{\rho}(G) \subset C_c(M^{\rho}(k))$, where the latter is defined as the space of functions of compact support on $M^{\rho}(k)$, the monoid attached to $\rho$ whose restriction to $G(k)$ are smooth (locally constant) and sending $C_c^{\infty}(G(k))$ to itself inside $C_c(M^{\rho}(k))$. The group $G$ being smooth as a variety, the singularities of the monoid $M^{\rho}$ are outside of $G$. Renner's construction (Section 2) realizes $M^{\rho_L} \subset M^{\rho}$ as a closed subvariety for any Levi subgroup $L$ that contains $T$, where $T$ is a fixed maximal torus of $G$, used in the construction of $M^{\rho}$.

The $\rho$-Harish-Chandra transform cannot be defined on $C_c(M^{\rho}(k))$ since the modulus character $\delta_P$ may vanish outside $L(k) \subset M^{\rho_L}(k)$, but it can be defined on the image of $\sS^{\rho}(G)$ inside $C_c(M^{\rho}(k)) \cap C^{\infty}(G(k))$, that lands in the image of $\sS^{\rho_L}(L)$ in $C_c(M^{\rho_L}(k))$ by the above discussion. 

Our commutativity axiom, which implies multiplicativity and multiplicativity itself give rise to an inductive scheme that allows for a definition of Fourier transform $J^{\rho}$ by building from the case of conjugacy classes of Levi subgroups $L$ of $G$. In fact, Theorem 5.3 gives the $\gamma$ factors $\gamma(s,\pi,\rho,\psi)$, $\pi$ an irreducible constituent of $\mathrm{Ind}(\sigma)$, equal to the inducing $\gamma$-factor $\gamma(s,\sigma,\rho_L,\psi)$, which in turn is defined through convolution by $J^{\rho_L}$. For example, for $GL_2$, the Levi subgroups consist of split tori for which a canonical Fourier transform exists (c.f. \cite{ngo2020hankel}; see (6.2) here) and $GL_2$ itself, which is equivalent to understanding supercuspidal $\gamma$-factors. We refer to section 5.4 for a more detailed discussion of this inductive construction. 

In the case of $GL_2$, Laurent Lafforgue \cite{lafforgue2014noyaux} has defined a candidate distrubution which is shown formally to commute with the Harish-Chandra transform and evidence exists that it may give the correct supercuspial factors as observed by Jacquet, but it is still unknown if this is the right distribution. Work in this direction for tamely ramified representations is being pursued by the second author.

Although our definition of the space $\sS^{\rho}(G)$ depends on the knowledge of how $J^{\rho}$ acts on $C_c^{\infty}(G(k))$, this seems to be the most efficient way of defining $\sS^{\rho}(G)$ at present and sufficient for our purposes as a working definition, allowing us to begin making some initial steps toward understanding the general theory, and as observed earlier after equation (0.1), essential in proving the uniform $K$-finiteness of $\rho$-Schwartz functions.

One hopes that the geometry of $M^{\rho}$ will provide some insight into what this Fourier transform ought to be. In fact, the geometric techniques used to study the basic functions on reductive monoids via arc spaces in the function field setting \cite{bouthier2016formal} tells us that the nature of the singularities of the monoid very much controls the asymptotics of the basic function. Taking cue from this, it is natural to consider the geometry of the singularities in the $p$-adic case as well. As a first step, we may classify the singularities of our monoids via the theory of spherical varieties and we find that there is a good and explicit choice of $G$-equivariant resolution of singularities \cite{brion1989spherical,perrinspherical}. The resolution is moreover rational and so we may pass without trouble between differential forms on the monoid and its resolution. The geometric aspects of this theory are discussed in part in Section 3 of the present paper. Since our Schwartz spaces are, at least tentatively, linked by the definition of the Fourier transform $J^{\rho}$ via $\sS^{\rho}(G) = C_c^{\infty}(G(k)) + J^{\rho}(C_c^{\infty}(G(k))$, we are able, at least speculatively, to unite the themes of this paper.   Here is the outline of the paper.

Section 1 is a quick review of the method for $GL(n)$ as developed in \cite{godement2006zeta}. Renner's construction of reductive monoids is briefly discussed in Section 2 which concludes with a treatment of the cases of symmetric powers for $GL(2)$, describing all the objects involved in those cases. Section $3$ covers the geometric aspects studied in the paper. This includes the resolution of the singularities of reductive monoids, leading to a proof of rationality of these singularities. This allows a transfer of measures from the monoid to its resolution as discussed in Section $4$ and can be applied to the integration of basic functions on corresponding toric varieties in Example $4.1$. Multiplicativity is stated and proved in Section $5$, concluding with the example of $GL(n)$ in $5.3$ and a discussion of the inductive nature of Fourier transforms in $5.4$. In proving multiplicativity, we have found it easier to work with the full functional equation rather than the definition given by convolutions. The cases of a tori and unramified data are addressed in Section $6$. The paper is concluded with a brief discussion of the doubling construction of Piatetski-Shapiro and Rallis with relevant references cited. 

\section*{Acknowledgements}

The authors would like to thank J. Getz, D. Jiang, and B.C. Ngo for helpful conversations. A part of this paper was presented by the first author during the month long program ``On the Langlands Program: Endoscopy and Beyond" at the Institute for Mathematical Sciences, National University of Singapore, December 17, 2018-January 18, 2019. He would like to thank the Institute and the organizers: W. Casselman, P.-H. Chaudouard, W.T. Gan, D. Jiang, L. Zhang, and C. Zhu, for their invitation and hospitality. Finally, we would like to thank Jayce Getz, Chun-Hsien Hsu, and Michel Brion for their comments after the paper was posted on arXiv. Both authors were partially supported by NSF grants DMS 1801273 and DMS 2135021

\section{The case of standard representation for $GL_n$} % 1

We recall that the Godeement--Jacquet \cite{godement2006zeta} theory for standard $L$-functions of
$GL_n$, which this method aims to generalize, can be presented briefly through
the definition of corresponding $\gamma$-factors.

Let $F$ be a $p$-adic field and $G=GL_n$. Let $\pi$ be an irreducible admissible representation of $GL_n(F)$. Given a Schwartz function $\phi$ on $M_n(F)$, i.e.,
 $\phi\in C^\infty_c(M_n(F))$, a smooth function of compact support on $M_n(F)$,
 one can define a zeta-function
 \[Z(\phi,f,s)=\int\limits_{GL_n(F)}\phi(x)f(x) |\det x|^s dx,\]
 where $f(x)=\langle \pi(x) v, \tv\rangle, \  v\in\sH(\pi)$ and
 $\tv\in\sH(\tpi)$, is a matrix coefficient and $s\in\bC$. Here $\tpi$ is the
 contragredient of $\pi$. Let
 \[\hphi(x): =\int\limits_{M_n(F)}\phi(y)\psi(tr(xy))dy \]
 be the Fourier transform of $\phi$ with respect to the (additive) character
 $\psi\ne 1$ of $F$.
 
If $\check{f}(g)=f(g^{-1})$, $g\in GL_n(F)$, then we can consider
$Z(\hphi,\check{f}, s)$. The Godement--Jacquet theory defines a $\gamma$-factor
$\gamma^{\std}(\pi,s)$ which depends only on $\pi$ and $s$ and is a rational
function of $q^{-s}$, satisfying
\begin{equation}   %1.1
Z(\hphi,\check{f}, (1-s)+\frac{n-1}{2})=\gamma^{\std}(\pi,s) Z(\phi, f, s+\frac{n-1}{2})
\end{equation}
for all $\phi$ and $f$.

It is not hard to see that if we introduce the Int$(G)$-invariant kernel, $G=GL_n$, 
\[\Phi_\psi(g)=\psi(tr (g)) |\det g|^n dg\]
of the Fourier transform, then
\begin{equation}\label{1.2}
    \Phi_\psi \star f |\det|^{s+\frac{n-1}{2}}=\gamma^\std (\pi,s) f
|\det|^{s+\frac{n-1}{2}}
\end{equation}
by virtu of irreducibility of $\pi$ and the Schur's lemma.

This formulation for the $\gamma$-factor is a quick and convenient way of
introducing them which is amenable to generalization. We can therefore write
\[\gamma^\std(\pi,s)=\Phi_\psi(\pi)=\int\limits_{GL_n(F)}\Phi_\psi(g)\pi(g) dg,\]
pointing to the significance of the kernel $\Phi_\psi$ in defining the
$\gamma$-factors.

\section{The general case; monoids and Renner's construction} % 2

To treat the general case we need to generalize $M_n(F)$. Let $k$ be an
algebraically closed field of characteristic zero. A monoid $M$ is an affine
algebraic  variety over $k$ with an associative multiplication and an identity
1. For our purposes, we also need $M$ to be normal, i.e., $k[M]$ is integrally 
closed in $k(M)$. We can always find a normalization in case $M$ is not normal,
i.e., an epimorphism $\tM\to M$ such that integral closure of $k[M]$ in $k(M)$
equals $k[\tM]$ as we realize $k[M]\hookrightarrow k[\tM]$. 

We thus let $M$ be a normal monoid and let $G=G(M)=M^*$, be the units of $M$. We
 say $M$ is {\it{reductive}} if $G$ is. We now like to attach a monoid to a
 finite dimensional representation $\rho$ of $\hG={^L}G$, $L$-group of $G$, $\rho:\hG\to GL(V_\rho)$, where $G$ is a split reductive group. Let $T\subset G$ be a maximal torus
 and write 
 \[\rho\ | \ \hT=\mathop{\bigoplus_{\lambda\in W(\rho)}}\lambda,\]
where $W(\rho)$ is the set of weights of $\rho$. Let $\Lambda=\Hom(\bG_m, T)$ be
the set of cocharacters of $T$ or characters of $\hT$ and set
$\Lambda_\bR=\Lambda\otimes{_\bZ}\bR$. 
Next, denote by $\Omega(\rho)$ the convex span in $\Lambda_\bR$ of weights of
$\rho$ and let $\xi(\rho)$ be the cone in $\Lambda_\bR$ generated by rays
through $\Omega(\rho)$.

Let $\sigma^\vee=\xi(\rho)^\vee\cap X^*(T)$, be the ``rational'' dual cone to
$\xi(\rho)\cap X_*(T)$ and $k[\sigma^\vee]$ the group algebra of $\sigma^\vee$. One
can then identify $\sigma^\vee$ as a subset of $k[\sigma^\vee]$ by
$\mu\in\sigma^\vee$
going to $\chi_\mu$, defind by
\[\chi_\mu(\eta)=0\quad\text{unless } \ \eta=\mu, \ \eta\in \sigma^\vee,\]
$\chi_\mu(\mu)=1$, 
and $\chi_{\mu_1}\cdot\chi_{\mu_2}=\chi_{\mu_1+\mu_2}$, where the sum is the one on the
semigroup $\sigma^\vee$. We note that this is valid for any semigroup $S$ and 
\[k[S]=\langle \chi_s | s\in S\rangle.\]

Now, assume $G$ has a character
\[\nu: G\longrightarrow \bG_m\]
such that
\[\bC^*\overset{\nu^\vee}{\longrightarrow} \hG
\overset{\rho}{\longrightarrow}GL(V_\rho)\]
sends $z\in\bC^*$ to $z\cdot\text{Id}$.  
This means that $\langle\nu,\omega\rangle=1$ for any weight $\omega$ of $\rho$. In
fact, for $z\in\bC^*$,
\[z^{\langle\nu,\omega\rangle}=\omega(\nu^\vee(z))=\rho(\nu^\vee(z))=z\]
 and thus
$\langle \nu,\omega\rangle=1$. Then $\nu\in\sigma^\vee$ and its existence
implies that $\xi(\rho)$ is strictly convex, i.e., has no lines in it. In fact,
the cone $\xi(\rho)$ is contained in the open half--space of vectors
$x\in\Lambda_\bR$, $\Lambda_\bR=\Lambda\otimes_\bZ \bR$, $\Lambda=\Hom(\bG_m,
T)$, satisfying $\langle\nu, x\rangle > 0$. It is therefore strictly convex. (cf.
\cite{ngo2020hankel},Proposition 5.1).

By the theory of toric varieties \cite{cox2011toric}, the strictly convex cone
$\xi(\rho)$ determines (uniquely) a {\it{normal}} toric variety, i.e., a
normal affine torus embedding $j:T\subset M_T$. Here $M_T$ is the monoid for $T$
attached to $\rho | \hT$. More precisely, $M_T=\text{Spec}(k[\sigma^\vee])$ by
Theorem 1.3.8, pg. 39 of \cite{cox2011toric}. By definition 3.19 of
\cite{renner2006linear}, $k[\sigma^\vee]$ is generated by $X(M_T)$ and thus
$X(M_T)=\sigma^\vee$, the semigroup defining $M_T$. The embedding $j:T\subset
M_T$, defines $j^*: X(M_T)\hookrightarrow X(T)$, a morphism of semigroups, into the character group of $T$.

The dominant characters in $X(T)$ all lie in $X(M_T)$ and are those that extend to semigroup morphism $M_T\to\bA^1=\bG_a$ (Proposition 3.20 of \cite{renner2006linear}).

Finally we observe that $\nu$ is integral and dominant and thus $\nu\in X(M_T)$.

\bigskip

To proceed, we remark that the Weyl group $W=W(G,T)$ acts on $T, M_T, X(T)$ and $X(M_T)$ in the usual manner.
Thus the dual rational cone $\sigma^\vee$ may be identified with $X(M_T)$, both semigroups, since its group algebra generated by elements of $X(M_T)$ or $\sigma^\vee$, is $k[M_T]$ as we discussed earlier.

Let $\lambda\in X(T)$ be a dominant (and integral) character. $\!\!$Then $\lambda | T_\der$ defines an irreducible finite dimensional (rational) representation $\mu^\circ_\lambda$ of 
$G_\der$, $T_\der=T\cap G_\der$, of highest weight $\lambda | T_\der$. Since
\[\mu^\circ_\lambda | Z(G)\cap G_\der=\lambda | Z(G)\cap G_\der,\]
we can extend $\mu^\circ_\lambda$ to an irreducible rational representation $\mu_\lambda=\mu^\circ_\lambda\otimes (\lambda | Z(G))$ of 
\[G=(G_\der \times Z(G)) / G_\der\cap Z(G).\]

\begin{definition}
$\mu_\lambda$ is called the irreducible representation of $G$ of highest weight $\lambda$, where $\lambda$ is a dominant rational character of $T$.
\end{definition}

This in particular is valid for dominant elements in $X(M_T)$. We note that $\nu\in X(M_T)$ is one such.

\bigskip

Now choose $\{\lambda_i\}^s_{i=1}$ so that $\bigcup^s_{i=1} W\!\cdot\!\lambda_i\subset X(M_T)$ generates $X(M_T)$. Let $(\mu_{\lambda_i}, V_{\lambda_i})$ be the representation attached to $\lambda_i$. Set $\mu=\bigoplus\limits^s_{i=1} \mu_{\lambda_i}$ and $V=\bigoplus\limits^s_{i=1} V_{\lambda_i}$. The character $\nu$ will be among these $\lambda_i$.  We may assume $\lambda_1=\nu$. Define $M_1=\overline{\mu(G)}\subset \End(V)$. We let $M$ be a normalization of $M_1$.

We note that we may take $\lambda_i | T_\der$ to be among the fundamental weights of $G_\der$, with $\mu_{\lambda_1}=\nu$ extending the trivial representation of $G_\der$ since $\nu$ is a representation (character) of $G/G_\der\simeq Z(G) / G_\der \cap Z(G)$.

\bigskip
\subsection{The case of symmetric powers of $GL_2$} % 3

As an example in this section we consider the symmetric power representations of $GL_2(\bC)$ and describe these objects in this case.

Let $G=GL_2$ and $\rho=\Sym^n : GL_2(\bC)\to GL_{n+1}(\bC)$, the $n$-th symmestric power of the standard representation of $GL_2(\bC)$. Write $\bC^{n+1}=\langle e_1,\dots, e_{n+1}\rangle$ with the basis $e_1,\dots, e_{n+1}$. Let $\{\mu_i\}$ denote the weights of $\Sym^n$. Then we can order them as 
\[\mu_i(\left(
\begin{array}{cc}
x & 0\\
0 & y
\end{array}\right) )=x^iy^{n-i}\quad ((x,y)\in (\bC^*)^2),\]
$i=0,\dots, n$.
We have
\[\xi(\Sym^n)\cap X_*(T)=\bZ_{\ge 0}-\text{span }\{(n-k, k)  | k=0,\dots, n\}\]
inside $\bR^2$ which equals
\[\bZ_{\ge 0}-\text{span }\{(m,l)| m+l\in n\bZ\}.\]
The dual cone to $\{(m,l)| m+l\in n\bZ\}$ is
\[\{(a,b)\in \frac 1n\bZ\times \frac 1n\bZ\  | \  a-b\in \bZ\}.\]
Thus the dual to $\xi(\Sym^n)\cap X_*(T)$ is the $\bZ_{\ge 0}-\text{span}$ of $\{(1,0), (0,1), (\frac 1n,\frac 1n)\}$.
It is a lattice in the shaded area, corresponding to $\sigma^\vee$
\bigskip

%************************figure

\begin{tikzpicture}[domain=0:2]

\draw[->] (-1,0) -- (8,0)
node[below right] {$x$};
\draw[->] (0,-1) -- (0,8)
node[left] {$y$};
\draw[ thick]  (0,6)node[left] {$(0,1)$} -- (1.5,1.5) node[left]{$(\frac{1}{n},\frac{1}{n})$} ;
    \draw[ thick] (1.5,1.5) -- (6,0)  node[below] {$(1,0)$};
   \tikz\draw[fill=gray!20!white] {(0,6)--(1.5,1.5)--(6,0)--(7.5,0)--(7.5,7.5)--(0,7.5)--(0,6)} ;
  \end{tikzpicture}

%\bigskip
%\bigskip

\vskip1truein

\noindent
We use $x,y$, and $z$ to denote $(1,0), (0,1)$ and $(\frac 1n, \frac 1n)$ in the semigroup algebra $k[\sigma^\vee]$ as before, i.e., $x=\chi_{(1,0)}$ and so on, then
\[k(x,y,z)=k[X,Y,Z] /  (XY-Z^n).\]
The corresponding toric variety is 
\[M_T=\Spec\,k[X,Y, Z] /  (XY-Z^n)\subseteq k^3,\]
the variety defined by the zeros of $XY-Z^n=0$, and
\begin{eqnarray*}
T&\!\!\!=\hskip-.1truein&M_T\cap (k^*)^3\\
&\!\!\!=\hskip-.1truein&\{(t_1, t^n_2t^{-1}_1, t_2) | t_i\in k^*, \ i=1,2\}
\end{eqnarray*}

\smallskip\noindent
{\underbar{The monoid $M$ for $\Sym^n$}} (Renner's construction): The dual cone in 
\[X^*(T)\otimes_\bZ\bQ=X_*(\hT)\otimes_\bZ\bQ\] is generated by $(1,0)(\frac 1n, \frac 1n)$ and $(0,1)$. The vectors $(1,0)$ and $(0,1)$ are $W$-conjugate and therefore we have as our dominant weights $\lambda_i=\{(1,0),(\frac 1n,\frac 1n)\}$. They correspond, respectively, to std, the standard representation, and $\nu=\det^{1/n}$ (to be explained) and thus
\begin{eqnarray*}
\mu: G\!&\longrightarrow&\!\! \End(V_\std\oplus V_\nu)=M_2\times \bA^1\\
g&\!\longmapsto&\!\! (g, (\det g)^{1/n}).
\end{eqnarray*}
Then
\begin{eqnarray*}
M&=&\!\!\overline{\mu(G)}\\
&=&\!\!\overline{\{(g,a)\ | \ \deg g=a^n}\}\\
&\simeq&\!\!\Spec\,k[X_1,\dots, X_5] \ / \ (X_1X_4-X_2X_3=X^n_5)\\
&=&\!\! Var(X_1X_4-X_2X_3=X^n_5).
\end{eqnarray*}

\medskip\noindent
{\underbar{The character $\nu$ for $\Sym^n$:}} 

Recall that the fibered product of $GL_2$ and $\bG_m$ giving the units of the monoid for $\Sym^n$ is (cf. \cite{shahidi2017local})
\[G=GL_2\times _{\bG_m}\bG_m=\{(g,a) \ | \ \det g=a^n\}=\left\{\begin{array}{cc} GL_2 & n=\text{odd}\\
SL_2\times GL_1 & n=\text{even}
\end{array}\right. .\]
We then have the commuting diagram

\[\begin{CD}
G=GL_2\times  _{\bG_m}\bG_m    @>{\text{Proj}}_1>>   GL_2 \\
\hskip.75truein@V{\text{Proj}}_2VV                               @VV{\det}V \\
\hskip.75truein\bG_m                         @>>>{\bG_m} 
\end{CD}\]
\[ \hskip.75truein x \longmapsto  x^n. \]
Thus
\[\begin{array}{ccc}
(g,a) & {\longmapsto}  &\hskip-.2truein g\\
\downmapsto &   & \hskip-.2truein\downmapsto \\
a & \longmapsto & {\det g=a^n}
\end{array}\]
and the left vertical arrow, the Proj$_2$, gives
\[\nu:(g,a)\longrightarrow (\det g)^{\frac 1n}=a\]
for which
\[z\overset{\nu^\vee}{\longrightarrow}\text{ diag }(z^{1/n},\dots, z^{1/n})\overset{\Sym^n}{\longrightarrow} z\cdot I_{n+1}.
\]

\section{Some geometry of reductive monoids as spherical varieties}

Renner's classification of reductive monoids uses the ``extension principle" \cite{renner2006linear}. The extension principle follows in the spirit of many similar classification results for spherical varieties that rely on the existence of an open $B \times B^{\textrm{op}}$-orbit where $B$ is a Borel subgroup of $G$, that is in Renner's case adapted to account for the monoid structure. By Renner's classification, the category of Reductive monoids is equivalent to the category of tuples $(G,T,\overline{T})$, where $T$ is any maximal torus in $G $ and $\overline{T}$ is a Weyl-group stable toric variety. A morphism of data $(G,T,\overline{T}) \to (G',T',\overline{T'})$ are given by a pair $(\varphi, \tau)$ where $\varphi:G \to G'$ is a morphism of reductive groups and $\tau: \overline{T} \to \overline{T'}$ a morphism of toric varieties such that the restriction of each morphism to the maximal torus agree $\varphi |T = \tau | T$. In the following, we reframe these results in terms of the theory of spherical varieties, in order to state the existence of a $G$-equivariant resolution of singularities \cite{brion2007frobenius}, \cite{rittatore2003reductive}.

Let $G$ be a split reductive group defined over a characteristic zero field $k$. Let $X$ be a variety defined over $k$ with an rational action $\alpha: G \times X \to X$. In this case we say $X$ is a $G$-variety. Let $\mathscr{O}_X$ be the sheaf of regular  on $X$. If $X$ is affine, we will identify $\mathscr{O}_X$ with the coordinate algebra $k[X]$. In this case $\alpha$ induces as usual a co-action map $\alpha^{*}:k[X] \to k[G] \otimes k[X]$ by $(\alpha^{*}f)(x,g)=f(g^{-1}x) = \sum_i h_i(g)f_i(x)$ with $h_i \in k[G]$, where the latter is a finite sum. Thus each $f$ determines a finite dimensional $G$-module. Because $G$ is reductive and we are in characteristic $0$, each finite dimensional $G$-module decomposes as a finite sum of irreducible representations indexed by their highest weight vector with weight $\lambda$. As such we may decompose $k[X] = \bigoplus k[X]_{\lambda}$, indexed by the $\lambda$ that appear in $k[X]$. 

\begin{definition}
A $G$-variety $X$ is \textit{spherical} if $X$ has an open $B$-orbit for some (hence any) Borel $B$ in $G$.
\end{definition}

Suppose $X$ is spherical. Then as above, by highest weight theory, each dominant integral character $\lambda$ of $T(k)$ that appears in $k[X]$ has a highest weight vector $f_{\lambda}$. The line $k\cdot f_{\lambda}$ is the unique line stabilized by $B$ on which $B$ acts through the character $\lambda: f_{\lambda}(bx) = \lambda(b)f_{\lambda}$. In other words $f_{\lambda}$ is a semi-invariant. Suppose $f_1$ and $f_2$ are semi-invariants that are $\lambda$-eigenfunctions appearing in $k[X]$. Then the rational function $f_1 / f_2$ is $B$-invariant. As the $B$-orbit in $G$ is dense, this implies $f_1/f_2$ is constant. Hence for spherical varieties, each $\lambda$ that appears can only appear with multiplicity one. For general reductive groups actions, even the ``naive" (categorical) quotient is reasonably well behaved.

\begin{theorem}
Let $X$ be an affine $G$-variety for a reductive group $G$. Then the ring of $G$-invariants $k[X]^{G}$ is finitely generated, say $k[X]^{G}=k[f_1,\ldots,f_n]$. Then $k[X]^{G} \hookrightarrow k[X]$ defines a good surjective quotient which is moreoever a categorical quotient $q: X \to X//G$. Each fiber of $q$ contains a unique closed $G$-orbit in $X$, and $X//G$ is normal if $X$ is. 
\end{theorem}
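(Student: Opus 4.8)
The plan is to assemble this statement from the standard structure theory of affine GIT quotients for reductive group actions in characteristic zero, which I would cite or sketch rather than reprove from scratch. First I would establish finite generation of $k[X]^G$. Since $G$ is reductive and $\mathrm{char}\,k = 0$, there is a Reynolds operator, i.e.\ a $G$-equivariant projection $R : k[X] \to k[X]^G$ which is a $k[X]^G$-module map. Writing $k[X]$ as a finitely generated $k$-algebra and combining the Reynolds operator with the Hilbert-basis argument (Hilbert's original proof, as in Nagata or Mumford's GIT) yields that $k[X]^G$ is a finitely generated $k$-algebra, say $k[X]^G = k[f_1,\dots,f_n]$. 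One then \emph{defines} $X /\!/ G := \mathrm{Spec}\,k[X]^G$, so that the inclusion $k[X]^G \hookrightarrow k[X]$ induces a dominant morphism $q : X \to X/\!/G$.

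Next I would verify that $q$ is a \emph{good quotient} in Seshadri's sense: $q$ is affine and surjective (surjectivity because the inclusion of rings is injective and both rings are affine, so $q$ is dominant with closed image, and the going-up/integral-extension considerations plus the Reynolds-operator splitting force surjectivity — alternatively one invokes that a good quotient morphism is always surjective), $\mathcal{O}_{X/\!/G} \xrightarrow{\sim} (q_* \mathcal{O}_X)^G$ by construction, and $q$ maps disjoint $G$-invariant closed subsets to disjoint closed subsets. This last separation property is exactly where the Reynolds operator does the work: given two disjoint $G$-invariant closed sets $Z_1, Z_2$, choose $h \in k[X]$ with $h|_{Z_1} = 0$, $h|_{Z_2} = 1$; then $R(h) \in k[X]^G$ still separates $Z_1$ from $Z_2$ because $R$ is the identity on invariant functions and $Z_i$ are $G$-stable. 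From the separation property one deduces that each fibre $q^{-1}(y)$ contains a unique closed $G$-orbit: any two closed orbits in a fibre would be disjoint $G$-invariant closed sets mapping to the same point $y$, contradicting separation; existence of at least one closed orbit in the fibre follows because $q^{-1}(y)$ is a nonempty $G$-invariant closed subset and orbits of minimal dimension in it are closed.

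Then I would show $q$ is a \emph{categorical quotient}: given any $G$-invariant morphism $\varphi : X \to Y$, pulling back gives $\varphi^* : k[Y] \to k[X]$ landing in $k[X]^G = k[X/\!/G]$, which produces the unique factorization through $q$; for non-affine $Y$ one covers $Y$ by affines and glues, using that the good-quotient property is local on the target. Finally, normality of $X/\!/G$ when $X$ is normal: $k[X]^G = k[X] \cap k(X)^G$ inside $k(X)$ (again via the Reynolds operator, or directly), and an intersection of the integrally closed domain $k[X]$ with a subfield of $k(X)$ is integrally closed in that subfield; since $k(X/\!/G) = k(X)^G$ by the previous considerations, $k[X]^G$ is integrally closed in its own fraction field, i.e.\ $X/\!/G$ is normal.

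The main obstacle, and the only place requiring genuine care rather than bookkeeping, is the finite generation of $k[X]^G$ together with the unique-closed-orbit statement; both rest essentially on the existence and $k[X]^G$-linearity of the Reynolds operator, which is available precisely because $G$ is linearly reductive in characteristic zero. Everything else — surjectivity, the categorical property, and preservation of normality — is then formal. Since this theorem is classical (it is Haboush/Nagata/Mumford GIT in characteristic zero, and the relevant statements for reductive monoids appear in \cite{rittatore2003reductive}), in the paper I would present it with a short proof sketch along these lines and a reference, rather than a full development.
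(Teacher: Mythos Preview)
Your proposal is correct and, in fact, goes well beyond what the paper does: the paper states this theorem without any proof, treating it as a standard result from geometric invariant theory to be invoked as background. Your Reynolds-operator sketch is the right argument and your closing remark---that in practice one would cite Mumford/Nagata rather than reprove it---is exactly the stance the paper takes, only the paper does not even include the sketch.
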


\begin{definition}
A spherical variety $X$ is \textit{simple} if it has a unique closed $G$-orbit.
\end{definition}

We are interested in reductive monoids, which have open $G$-orbit and are spherical with respect to $G \times G$ with an open dense borel $= B \times B^{\mathrm{op}}$ orbit. 

\begin{proposition}
Suppose $X$ has an open $G$-orbit. Then $X$ has a unique closed orbit.
\end{proposition}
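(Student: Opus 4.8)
The plan is to reduce the statement to the affine/categorical quotient picture supplied by the preceding theorem and then use the openness of the $G$-orbit to pin down the quotient. First I would assume, as in the surrounding discussion, that $X$ is affine and normal (the monoid case is affine by definition), so that Theorem~2.6 applies: the categorical quotient $q\colon X\to X/\!/G$ exists, each fiber of $q$ contains a unique closed $G$-orbit, and distinct closed orbits are separated by $q$. Thus \emph{uniqueness} of a closed orbit in $X$ is equivalent to the statement that $X/\!/G$ is a single point, i.e.\ that $k[X]^{G}=k$.

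Next I would exploit the open $G$-orbit. Let $U\subset X$ be the open dense $G$-orbit, and pick $x_0\in U$ with $U\cong G/H$, $H=\mathrm{Stab}_G(x_0)$. Any $f\in k[X]^{G}$ is a regular function on $X$ which is $G$-invariant, hence constant on $U$; since $U$ is dense and $X$ is a variety (in particular reduced and irreducible — the monoid $M^{\rho}$ is irreducible, being the closure of the image of the connected group $G$), a regular function that is constant on a dense open set is globally constant. Therefore $k[X]^{G}=k$, so $X/\!/G$ is a point and every fiber of $q$ is all of $X$. By Theorem~2.6 that single fiber contains exactly one closed $G$-orbit, which is the desired conclusion.

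I would then remark that this closed orbit is necessarily contained in the complement $X\setminus U$ whenever $U$ itself is not closed (the generic situation for monoids, where the boundary $M^{\rho}\setminus G$ is nonempty), consistent with the later claim that the singularities lie outside $G$. For completeness I would also note the existence of \emph{at least one} closed orbit: $X$ is affine, hence its $G$-orbit closures are affine $G$-varieties of strictly decreasing dimension, so a minimal-dimensional orbit closure is a single closed orbit — though this already follows from Theorem~2.6 applied to the (nonempty) unique fiber.

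The only real subtlety — the step I would be most careful about — is the passage from ``affine'' hypotheses to the stated generality: if one wants Proposition~2.8 for a not-necessarily-affine $G$-variety with open orbit, the argument via $k[X]^{G}$ must be localized, covering $X$ by $G$-stable affine opens and checking that a closed orbit in such a chart remains closed in $X$; but since the application is to the affine monoids $M^{\rho}$, I would simply state the proposition for affine (or, more restrictively, affine spherical) $X$, where the one-line argument ``$k[X]^{G}=k$ because invariants are constant on the dense orbit, then quote Theorem~2.6'' is complete. Irreducibility of $X$ is used essentially and should be part of the hypotheses (it holds for $M^{\rho}=\overline{\mu(G)}$).
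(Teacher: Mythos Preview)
Your proposal is correct and follows essentially the same route as the paper: show that $k[X]^{G}=k$ (equivalently, that the quotient map $q$ is constant because it is constant on the dense open orbit), hence $X/\!/G$ is a point, and then invoke Theorem~3.2 to conclude that the unique fiber contains a unique closed orbit. Your write-up is more careful than the paper's one-line version in making explicit the role of irreducibility/reducedness in the step ``constant on a dense open $\Rightarrow$ globally constant,'' and your additional remarks on existence and on the affine hypothesis are sound but not needed for the paper's purposes.
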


\begin{proof}
The reductive quotient $q:X \to X//G$ is constant on orbits, in particular on the open orbit. Hence $X//G = \{ \mathrm{pt} \}$. The fiber $q^{-1}(\mathrm{pt})$ contains a unique closed orbit by Theorem 3.2
\end{proof}

Therefore such $X$ are simple. Once again let us consider an affine simple $X$ as a $G \times G$ variety. Decomposing $\bigoplus_{\lambda} k[X]_{\lambda} \cong V_{\lambda}$ where $V_{\lambda}$ is the highest weight module for $\lambda$. The $(B,\lambda)$ eigenfunction $f_{\lambda}$ is $U$-invariant. Thus one may consider taking $U\times U^{\mathrm{op}}$-invariants $k[X]^{U \times U^{\textrm{op}}}$ are therefore generated as a vector space by the $f_{\lambda}$. Using the following

\begin{theorem}
Let $G$ be reductive with maximal unipotent subgroup $U$, and let $X$ be a $G$-variety. Then $k[X]^{U}$ is finitely generated. Moreover $X/U = \operatorname{spec}k[X]^U$ is normal if $X$ is. 

\end{theorem}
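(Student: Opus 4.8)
The plan is to reduce the statement to Theorem~3.2 by the classical unipotent reduction of Had\v{z}iev and Grosshans, the one genuinely new ingredient being finite generation of the flag algebra. Set $A:=k[G]^{U}$, where $U$ acts on $G$ by right translation $R_{g}\colon f(h)\mapsto f(hg)$; thus $A=k[G/U]$ is the ring of regular functions on the quasi-affine homogeneous space $G/U$, and $G$ acts on $A$ rationally by left translation $L_{g}\colon f(h)\mapsto f(g^{-1}h)$. First I would record that $A$ is finitely generated: in characteristic zero the algebraic Peter--Weyl decomposition gives $k[G]\cong\bigoplus_{\lambda}V_{\lambda}\otimes V_{\lambda}^{*}$ over the dominant weights, and taking right $U$-invariants picks out the highest-weight line in each $V_{\lambda}^{*}$, so $A\cong\bigoplus_{\lambda\ \mathrm{dominant}}V_{\lambda}$ with every irreducible occurring once; since the monoid of dominant weights of $T$ is finitely generated (Gordan's lemma), say by $\lambda_{1},\dots,\lambda_{r}$, and since for the flag algebra the multiplication map $V_{\mu}\otimes V_{\nu}\to V_{\mu+\nu}$ onto the Cartan component is surjective, $A$ is generated by the finite-dimensional subspace $V_{\lambda_{1}}\oplus\dots\oplus V_{\lambda_{r}}$.

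Next I would establish the reduction isomorphism
\[
k[X]^{U}\;\cong\;\bigl(k[X]\otimes_{k}A\bigr)^{G},
\]
where $G$ acts diagonally through its given action on $k[X]$ and through left translation on $A$. One proves this first with $k[G]$ in place of $A$: on $k[X]\otimes k[G]$ put the diagonal action $g\cdot(a\otimes f)=(g\cdot a)\otimes L_{g}f$ and the commuting translation action $g\cdot(a\otimes f)=a\otimes R_{g}f$. Pulling back along the automorphism $(x,h)\mapsto(hx,h)$ of $X\times G$ converts the diagonal action into left translation on the $k[G]$ factor alone, whose invariants are $k[X]$, and simultaneously converts the translation action into the original $G$-action on this copy of $k[X]$. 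Computing the invariants of both actions in either order then gives $k[X]^{U}$ on one side (take the translation-$U$ invariants last) and $\bigl(k[X]\otimes k[G]^{U}\bigr)^{G}=(k[X]\otimes_{k}A)^{G}$ on the other (take them first, using that left translation preserves $A=k[G]^{U}$ since it commutes with right translation).

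Combining the two steps, $k[X]\otimes_{k}A$ is a finitely generated $k$-algebra, so $Z:=\Spec(k[X]\otimes_{k}A)=X\times\Spec(A)$ is an affine $G$-variety with rational $G$-action, and Theorem~3.2 shows $k[Z]^{G}=(k[X]\otimes_{k}A)^{G}\cong k[X]^{U}$ is finitely generated; this is the first assertion, with $q\colon X\to X/U:=\Spec\,k[X]^{U}$ the morphism dual to the inclusion $k[X]^{U}\hookrightarrow k[X]$. For normality one may assume $X$ irreducible, since the connected group $U$ stabilizes each irreducible component; then $k(X)$ carries a $U$-action, $k[X]^{U}=k[X]\cap k(X)^{U}$, and $\operatorname{Frac}(k[X]^{U})\subseteq k(X)^{U}$. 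If $X$ is normal and $f\in\operatorname{Frac}(k[X]^{U})$ is integral over $k[X]^{U}$, then $f$ is integral over $k[X]$, hence $f\in k[X]$ by normality, and also $f\in k(X)^{U}$; therefore $f\in k[X]\cap k(X)^{U}=k[X]^{U}$, so $k[X]^{U}$ is integrally closed and $X/U$ is normal.

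The main obstacle is the first step: finite generation of $k[G/U]$ is what genuinely invokes highest-weight theory (hence reductivity and $\operatorname{char}k=0$) beyond what already entered Theorem~3.2, and it is exactly this that allows the affine scheme $\Spec(A)$ --- the canonical affine closure of the merely quasi-affine $G/U$ --- to be fed back into Theorem~3.2. By comparison the reduction isomorphism is a formal untwisting computation with left and right translations, and the normality argument is routine once finite generation is available.
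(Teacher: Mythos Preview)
Your argument is correct and follows essentially the same route as the paper: both first establish finite generation of the flag algebra $k[G]^{U}$, then use the Had\v{z}iev--Grosshans transfer isomorphism $k[X]^{U}\cong(k[X]\otimes k[G]^{U})^{G}$ (the paper phrases this geometrically as $X/U\cong X\times^{G}G/U$), and finally appeal to Theorem~3.2. Your version is considerably more detailed---you spell out the Peter--Weyl/Gordan argument for finite generation of $A$ and the untwisting automorphism that yields the transfer isomorphism---and you also supply the normality argument, which the paper's proof sketch omits entirely despite normality being part of the statement.
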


\begin{proof}
One first establishes the theorem for $G/U$ i.e. $k[G]^{U}$ is finitely generated and in fact $G/U$ is a geometric quotient (a so called horospherical variety). One has a map $\Phi: X/U \cong X \times ^{G} G/U$ where the quotient is by the diagonal action. On  coordinate rings: a $U$-invariant $f$ defines a $G$-invariant functions $(\Phi^{*}f)(x,gU)=f(gx)$. Thus by Theorem 1.2 $k[X \times G/U]^G \cong k[X]^U$ is finitely generated. 
\end{proof}
We can conclude that the variety $X/(U \times U^{\mathrm{op}})$ is a $T \cong U \backslash BB^{\mathrm{op}} / U^{\mathrm{op}} \hookrightarrow U^{\mathrm{op}} \backslash G / U $ variety, on which $T$ acts on $f_{\lambda}$ through the character $\lambda$. In other words, we have a ring $\bigoplus (V_{\lambda} \otimes V_{\lambda}^{*})^{(U \times U^{\textrm{op}})}$ graded by $\Lambda_X = \{ \lambda \in X^{*}(T): \ k[X]_{\lambda} \neq 0 \}$. By Theorem $3.5$, this is a finitely generated monoid. Each summand has a diagonalizable action by the torus $T$, giving an equivalent characterization of toric varieties: hence defines an affine $T$ embedding. If $X$ is a reductive monoid, this must therefore be equal to Renner's cone. Moreover, if $X$ is normal the associated toric variety is normal, hence the cone of weights of $X$ defining the toric variety is saturated.

\begin{remark}
We have $X$ normal, and the map $X \to X//U$ is faithfully flat. It is sometimes called a \textit{toric degeneration} or \textit{contraction}, see \cite{popov1987contraction}. This is used to show that $X$ has rational singularities if and only if $\overline{T}$ does, because the argument relies on a flat base change argument \cite{elkik1978singularites}.
\end{remark}

Recall that a $G$-variety is simple if it contains a unique closed orbit. When $X$ is affine, it is enough that $G$ embeds as an open subvariety to imply $X$ is simple. We state without proof the following:

\begin{proposition}
Any $G$-variety can be covered by simple $G$-varieties.
\end{proposition}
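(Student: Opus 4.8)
The statement to prove is: \emph{Any $G$-variety can be covered by simple $G$-varieties.} The natural strategy is a Sumihiro-type local structure argument: reduce from an arbitrary $G$-variety to the quasi-projective case, then to the affine case, and finally use the characterization already recorded in the excerpt — namely that for an affine $G$-variety it suffices that the relevant categorical quotient $X /\!/ G$ be a point (Proposition 3.3 and the remark after it), together with the fact that any point of $X$ lies in a $G$-stable affine open on which we may further cut down. So the plan is: (1) invoke Sumihiro's theorem that a normal $G$-variety is covered by $G$-stable quasi-projective opens, and that each such quasi-projective $G$-variety admits a $G$-equivariant locally closed immersion into $\bP(V)$ for some finite-dimensional $G$-module $V$; (2) pass to $G$-stable affine opens; (3) within each $G$-stable affine open $U$, show that every point lies in a smaller $G$-stable affine open that is simple.

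For step (3), the key point is this. Let $U = \Spec A$ be a $G$-stable affine open and let $q\colon U \to U /\!/ G = \Spec A^G$ be the categorical quotient, which exists and is surjective with the fiber-wise unique-closed-orbit property by Theorem 3.2. Given $x \in U$, choose $f \in A^G$ with $f(q(x)) \neq 0$; then the principal open $U_f = \Spec A_f$ is $G$-stable (since $f$ is $G$-invariant), affine, and contains $x$. Iterating/localizing further on $A^G$, one can shrink $U /\!/ G$ around $q(x)$; but to get a genuinely \emph{simple} piece one localizes so that the image $q(x)$ becomes the unique closed point whose fiber we care about — more precisely, one uses that $U /\!/ G$ is itself a variety, hence covered by affine opens each of which can be shrunk to have a unique closed orbit mapping to a chosen closed point, but this does not immediately make $U_f /\!/ G$ a point. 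The cleaner route: a $G$-variety $X$ is simple as soon as $X$ contains $G$ as an open orbit (stated in the excerpt just before the proposition), so it suffices to cover $X$ by $G$-stable opens each containing a dense $G$-orbit. By generic flatness and the fact that the union of maximal-dimensional orbits is open and $G$-stable, pick the orbit stratification and take, for each orbit $\sO$, a $G$-stable affine open $V_\sO$ in which $\sO$ is closed and of maximal dimension among orbits meeting $V_\sO$; such $V_\sO$ exists by Sumihiro (embed in $\bP(V)$, take the affine cone chart complementary to a $G$-stable hyperplane section missing $\sO$) and is then simple with unique closed orbit $\sO$. Letting $\sO$ range over all orbits covers $X$.

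I would organize the write-up as: first a one-line reduction to the normal case (replace $X$ by a $G$-stable affine chart's normalization, or simply note sphericality/normality is not needed — the categorical quotient machinery of Theorem 3.2 applies to any affine $G$-variety, normal or not); then cite Sumihiro for the $G$-stable quasi-projective covering and the equivariant projective embedding; then the orbit-by-orbit construction of simple affine charts as above, concluding with ``each $V_\sO$ contains its orbit $\sO$ as the unique closed orbit, hence is simple, and $\bigcup_\sO V_\sO = X$.''

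\medskip

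\noindent\textbf{Main obstacle.} The delicate point is step (1)–(2): producing $G$-stable affine (or quasi-projective) opens around an arbitrary point of a possibly non-quasi-projective $G$-variety. This is exactly the content of Sumihiro's equivariant completion/embedding theorem, and everything else is formal once it is in hand; for the reductive monoids actually used in the paper this is automatic since they are affine from the outset, so in practice one only needs the affine case, where step (3) — localizing at a $G$-invariant function to isolate a single closed orbit — is the real (but short) content.
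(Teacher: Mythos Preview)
The paper does not actually prove this proposition: it is introduced with ``We state without proof the following'' and is quoted from the Luna--Vust theory (the reference is \cite{knop1991luna}). So there is no paper proof to compare against. What remains is to assess whether your argument is correct on its own terms, and here there is a genuine gap.

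The statement, read literally for an arbitrary $G$-variety, is \emph{false}; the intended context (clear from the surrounding discussion and the Knop citation) is spherical varieties, where the number of $G$-orbits is finite. A counterexample in the general case: let $G=\bG_m$ act on $\bA^2$ by $t\cdot(x,y)=(tx,y)$. The closed orbits are exactly the points of the $y$-axis, so any nonempty $G$-stable open subset meets the $y$-axis in an open (hence infinite) set and contains infinitely many closed orbits. No cover by simple opens exists. Your write-up never invokes sphericality or finiteness of orbits, so at best it is trying to prove a false statement.

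Even restricted to the spherical case, the argument has problems. Your ``cleaner route'' misreads the paper: the sentence ``it is enough that $G$ embeds as an open subvariety to imply $X$ is simple'' is a \emph{sufficient} condition (for affine $X$), not the definition, so it does not reduce the problem to covering by opens with a dense orbit. In your orbit-by-orbit construction you ask for $V_\sO$ in which $\sO$ is closed and of \emph{maximal} dimension among orbits meeting $V_\sO$; this neither forces $\sO$ to be the unique closed orbit nor is it the right inequality (closed orbits sit at the \emph{bottom} of the closure order). The hyperplane-section step is also not justified: there is no reason a $G$-stable hyperplane avoiding $\sO$ should remove all other closed orbits.

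The standard short proof in the spherical case is the one you are circling around but never land on: for a closed $G$-orbit $Y\subset X$, set
\[
X_Y \;=\; X \setminus \bigcup_{\substack{Z\ \text{a $G$-orbit}\\ Y\not\subset\overline{Z}}} \overline{Z}.
\]
Finiteness of orbits makes this a $G$-stable open set, and $Y$ is its unique closed orbit; every $x\in X$ lies in $X_Y$ for $Y$ the closed orbit in $\overline{G\cdot x}$. Sumihiro is not needed here, and the categorical-quotient localization you sketch in step~(3) is a red herring: localizing at $f\in A^G$ shrinks $U/\!/G$ but, as you yourself note, does not make it a point.
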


To classify a general spherical variety one needs the following additional data.

\begin{definition}
Let $V(X)$ denote the $G$-stable discrete valuations on $k[G]$.
\end{definition}

\begin{definition}
Let $\DivB(X)$ denote the set of $B$-stable prime divisors of $X$.
\end{definition}

\begin{definition}
Let $Z$ be a $G$-orbit in $X$. Then $\DivB(X:Z)$ is the set of $B$-stable prime divisors containing $Z$.

\end{definition}

\begin{definition}
Let $\mathcal{B}(X)$ denote the set of irreducible $G$-stable divisors.
\end{definition}

\begin{proposition}
For a divisor $D \in \DivB(X)$, either

\begin{enumerate}
    \item The $B$-orbit $B\cdot D$ is open and dense in the open orbit of $X$.
    \item $D$ is $G$-stable.
\end{enumerate}
\end{proposition}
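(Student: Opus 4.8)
The plan is to analyze the action of $G$ on a $B$-stable prime divisor $D$ by tracking how $D$ meets the open $B\times B^{\mathrm{op}}$-orbit (equivalently the open $G$-orbit, since for a reductive monoid the open $G$-orbit $G$ is itself a single $B\times B^{\mathrm{op}}$-orbit after multiplying by $T$, and in general the open $G$-orbit contains a dense $B$-orbit by sphericality). Write $X^\circ$ for the open $G$-orbit and $\Omega$ for the open $B$-orbit inside $X^\circ$. The key dichotomy is whether $D$ meets $X^\circ$ or not. If $D \cap X^\circ = \emptyset$, then $D$ is contained in the complement $X \setminus X^\circ$, which is a closed $G$-stable subset; since $D$ is an irreducible divisor and the irreducible components of a $G$-stable closed set are permuted by the connected group $G$, each component is itself $G$-stable, so $D$ is $G$-stable and we are in case (2).

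So suppose $D \cap X^\circ \neq \emptyset$. Then $D \cap X^\circ$ is a non-empty $B$-stable divisor of the homogeneous space $X^\circ \cong G/H$. First I would argue that $B\cdot D$ is dense in $X^\circ$: the orbit $\Omega = B\cdot \Omega$ is open dense in $X^\circ$, and $D\cap X^\circ$, being $B$-stable of codimension one, is a union of $B$-orbit closures; if no $B$-orbit in $D\cap X^\circ$ were dense in $X^\circ$ then $D\cap X^\circ$ would be contained in the complement $X^\circ\setminus\Omega$, which has codimension $\ge 1$ but is a union of finitely many $B$-stable irreducible subvarieties each of which I must rule out as being a divisor — this is where sphericality is used, since $X^\circ\setminus\Omega$ is the union of the finitely many colours (the $B$-stable prime divisors of $X^\circ$), and if $D\cap X^\circ$ is one of these colours then indeed $B\cdot(D\cap X^\circ)$ is open and dense in its closure, hence in $X^\circ$, giving case (1). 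To make this clean I would instead phrase it as: $D \cap X^\circ$ is an irreducible $B$-stable divisor of $G/H$, hence (by definition of a colour) $B\cdot(D\cap X^\circ)$ is open in $D\cap X^\circ$, and since $\Omega$ is the unique open $B$-orbit and $D\cap X^\circ$ has codimension one, the orbit $B\cdot(D\cap X^\circ)$ has codimension one in $X^\circ$ and its closure is all of $D\cap X^\circ$; taking closures in $X$ shows $\overline{B\cdot D}$ has codimension one. But "open and dense in the open orbit of $X$" in the statement of (1) should be read as: $B\cdot D$, interpreted appropriately, accounts for an open dense piece — here I think the intended reading is that $D$ itself is not $G$-stable precisely when it is the closure of a colour, and then the non-$G$-stable divisors are exactly these. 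I would therefore restructure case (1) as the assertion "$D$ is the closure of a $B$-stable prime divisor of the open orbit, i.e. a colour," which is the standard trichotomy for spherical embeddings.

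The cleanest route, and the one I would actually write, is: consider the finite set of irreducible components of $X\setminus X^\circ$ that are divisors — these are $G$-stable by the connectedness argument above — and the finite set of colours $\overline{D_1},\dots,\overline{D_r}$ where $D_i$ are the $B$-stable prime divisors of $X^\circ$. Every $B$-stable prime divisor $D$ of $X$ either lies in $X\setminus X^\circ$ (hence is $G$-stable, case 2) or meets $X^\circ$, in which case $D\cap X^\circ$ is a non-empty $B$-stable divisor of $X^\circ$, hence equals one of the $D_i$, hence $D = \overline{D_i}$ and $B$ acts on $D$ with a dense orbit in $X^\circ$ meeting $D$, giving case (1).

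The main obstacle I anticipate is purely expository: pinning down the precise intended meaning of "the $B$-orbit $B\cdot D$ is open and dense in the open orbit of $X$" — since $D$ is a divisor it is not literally an open subset of $X^\circ$, so what is meant is that $D$ restricted to $X^\circ$ is a single colour, on which $B$ acts with a dense orbit. Once that is clarified the proof is a two-line case split on whether $D$ meets the open $G$-orbit, the only non-formal input being the fact (sphericality) that $X^\circ$ has only finitely many $B$-stable prime divisors and that an irreducible $B$-stable divisor of a spherical homogeneous space carries a dense $B$-orbit, together with the elementary fact that irreducible components of a $G$-stable closed set are individually $G$-stable when $G$ is connected.
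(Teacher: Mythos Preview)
The paper does not actually supply a proof for this proposition; it is stated without argument, and the surrounding discussion defers to \cite{knop1991luna} for the details of the Luna--Vust classification. So there is no proof in the paper to compare against.

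Your argument is the standard one and is correct: split on whether $D$ meets the open $G$-orbit $X^\circ$. If not, $D$ lies in the $G$-stable closed set $X\setminus X^\circ$, and since $G$ is connected the irreducible components of this set are individually $G$-stable, giving case (2). If $D\cap X^\circ\neq\emptyset$, then $D\cap X^\circ$ is an irreducible $B$-stable divisor of the spherical homogeneous space $X^\circ$, i.e.\ a colour, and $D$ is its closure; this is case (1). You are also right to flag the phrasing of (1) as imprecise: since $D$ is $B$-stable one has $B\cdot D=D$, which is codimension one and certainly not open in $X^\circ$. The intended content is exactly what you extracted, namely that $D$ meets the open $G$-orbit (equivalently, $G\cdot D$ is dense in $X$), so that $D$ is the closure of a colour rather than a $G$-stable boundary divisor. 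Your ``cleanest route'' paragraph is the version to keep; the earlier exploratory paragraph can be dropped.
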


Briefly (although see \cite{knop1991luna} for details) a simple spherical variety $X$ is determined by its weight monoid, plus the data of which $B$-stable boundary divisors containing the unique closed orbit $Z$ are $G$-stable and which are not. More precisely, each divisor $D$ defines a valuation by first restricting $D \cap G$ which defines a valuation on the multiplicative group of rational functions $k(G)^{\times}$ on $G$ (the valuation $v_{D} \cap G$ is the order of vanishing of a rational function $f/g$ on $D \cap G$). This defines a so-called \textit{colored cone}, defined by the $\bQ_{\geq 0}$ span of the finite number of valuations $v_{D}$ as above in which the colors are the valuations that are $B$-stable but not $G$-stable (or dually, their corresponding divisors have a $B$-open-orbit). 

Thus the set $D(X:Z)$ gives the set of \textit{colors} of the simple spherical variety $X$. The cone generated by $\mathcal{B}(X)$ and the natural image of $D(X:Z)$ in the set of $K[G]$ valuations is the \textit{colored cone} $\mathcal{C}$ determined by the data $(V(X),\mathcal{B}(X))$ that determines up to isomorphism the spherical variety $X$. For reductive monoids, this cone is equivalent to the one constructed in the earlier section via highest weight theory.

\begin{example}
For a reductive monoid $M$, there is a beautiful description of the boundary $\partial M = M \backslash G$ in terms of $B \times B^{\text{op}}$-stable boundary divisors in the form of an extended Bruhat decomposition: Let $R = \overline{N_G(T)} \subset M$ be the Zariski closure of the normalizer of a maximal torus $T$ in $G$. Let $I(M)$ be the set of idempotents in $M$, and note that reductive monoids are \textit{regular} (in Renner's sense), that is, we can decompose $M=G \cdot I(M)$. Then we can construct the \textit{Renner monoid} (sometimes called the Rook monoid) $\mathcal{R} := R/T$. Because reductive monoids are regular, $\mathcal{R}$ makes sense as a finite monoid whose unit group is the Weyl group $W$, and having the property that 

\[
M = \coprod_{x \in \mathcal{R}} BxB.
\]

From this description, it may be deduced that the set $D(M:Z)$, with $Z = \{ 0 \}$ the unique closed orbit in $M$, is given by the codimension one orbits $\overline{Bs_{\alpha}B^{\text{op}}}$ for $s_{\alpha} \in \mathcal{R}$ the simple reflection in the Weyl group determined by the simple root $\alpha$. 
\end{example}

Thus, for a monoid with reductive group $G$ embedded as its unit group, the colors of $M$ are all the $B \times B^{\text{op}}$ stable irreducible divisors of $G$, and thus the monoid is determined purely by the data $\mathcal{B}(M)$ or equivalently $\mathcal{C}(M)$. We state for convenience this form of the classification.

\begin{theorem}
Let $G$ be a reductive group. The irreducible, normal algebraic monoids $M$ with unit group $G$ are the strictly convex polyhedral cones in $X_{*}(T) \otimes \mathbb{Q}$ generated by $D(M)$ and a finite set of elements in $V(G)$.
\end{theorem}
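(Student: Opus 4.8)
The plan is to deduce the classification from the Luna--Vust theory of simple spherical embeddings, as summarised in the discussion preceding Example 3.13, together with Rittatore's theorem \cite{rittatore2003reductive} that a normal affine $G\times G$-equivariant embedding of a reductive group $G$ --- having $G$ as its open orbit --- automatically carries a unique monoid structure extending the group law. Granted that equivalence, it suffices to identify the colored-cone datum of such an embedding and to translate ``affine, normal, with unit group exactly $G$'' into conditions on that datum.

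First I would set up the dictionary. Regard $M$ as a $G\times G$-variety via $(g_1,g_2)\cdot m=g_1 m g_2^{-1}$; then $G\subseteq M$ is an open orbit, isomorphic to $(G\times G)/\Delta(G)$, with open $B\times B^{\mathrm{op}}$-orbit the big cell, so $M$ is a spherical embedding of the homogeneous space $G$, and by Proposition 3.4 it is simple, with a unique closed orbit $Z$, namely the minimal two-sided ideal (kernel) of $M$. By the Luna--Vust/Knop classification (\cite{knop1991luna}), $M$ is then determined up to $G\times G$-equivariant isomorphism by the cone $\mathcal C\subseteq X_*(T)\otimes\mathbb{Q}$ generated by the images of the valuations attached to the colors in $D(M:Z)$ and to $\mathcal B(M)\subseteq V(G)$ (the finitely many $G\times G$-stable prime divisors of $M$), together with the bookkeeping of which colors through $Z$ occur --- this last being needed in general because distinct colors may have the same image in $X_*(T)\otimes\mathbb{Q}$. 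Now Example 3.13 shows that $D(M:Z)$ is the full set of colors $\overline{Bs_\alpha B^{\mathrm{op}}}$, $\alpha$ a simple root, i.e. $D(M:Z)=D(M)$, because $Z$, being the kernel, lies in the closure of every $B\times B^{\mathrm{op}}$-orbit and hence in every color. Thus the colored bookkeeping is vacuous and $M$ is determined by the single cone $\mathcal C$, generated by $D(M)$ together with a finite subset of $V(G)$.

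Next I would match the remaining hypotheses with convexity of $\mathcal C$ and identify $\mathcal C$ with Renner's cone of Section 2. By Theorems 3.2 and 3.5 the contraction $M\to M/(U\times U^{\mathrm{op}})=\Spec k[M]^{U\times U^{\mathrm{op}}}$ is an affine toric variety, normal whenever $M$ is; sphericity forces multiplicity one in $k[M]=\bigoplus_{\lambda}V_{\lambda}\otimes V_{\lambda}^{*}$, so $k[M]^{U\times U^{\mathrm{op}}}=k[\mathcal C^{\vee}\cap X^{*}(T)]$ and this toric variety is $\Spec k[\mathcal C^{\vee}\cap X^{*}(T)]$, which is precisely $M_T$ of Section 2, so that $\mathcal C=\sigma$ is Renner's cone. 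The unit group of $M$ being exactly $G$ --- equivalently, $T$ being the dense torus of $M_T$ --- forces $\mathcal C$ to be strictly convex, and normality of $M$ makes $M_T$ normal, hence the weight monoid $\mathcal C^{\vee}\cap X^{*}(T)$ saturated, so that $\mathcal C$ is genuinely rational polyhedral (as recorded before Remark 3.6). Conversely, given any strictly convex rational polyhedral cone $\mathcal C$ generated by $D(M)$ and finitely many elements of $V(G)$, Luna--Vust produces a normal simple embedding $M$ of $G$ with colored cone $(\mathcal C,D(M))$; the affineness criterion for simple spherical embeddings (all colors present and $\mathcal C$ strictly convex) makes $M$ affine, and Rittatore's theorem then promotes it to an irreducible normal reductive monoid with unit group $G$. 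The two assignments are mutually inverse, which is the assertion.

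The step I expect to be the main obstacle is exactly this transfer between the geometric ($G\times G$-variety) and the algebraic (monoid) pictures: that the Luna--Vust embedding is automatically a monoid, and that affineness of the monoid is read off from strict convexity of its cone. I would invoke Rittatore's equivalence between normal affine embeddings of $G$ and normal reductive monoids rather than reprove it. The subsidiary point that needs genuine care is the identification of the Luna--Vust cone with Renner's cone of Section 2; this rests on the multiplicity-one property for spherical $G\times G$-varieties, which realises $k[M]^{U\times U^{\mathrm{op}}}$ as the semigroup algebra $k[\mathcal C^{\vee}\cap X^{*}(T)]$ and thereby makes both the strict-convexity requirement and the coincidence of the two constructions transparent.
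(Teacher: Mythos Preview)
The paper does not give a formal proof of this theorem: it is stated ``for convenience'' as a known form of the classification, with the relevant Luna--Vust/Knop theory sketched in the paragraphs immediately preceding it (the discussion before and after Example~3.13). Your proposal is a correct and more detailed fleshing-out of exactly that sketch---simple spherical embeddings classified by colored cones, all colors present for a reductive monoid via Example~3.13, hence the datum reduces to the cone---and you make explicit the converse direction via Rittatore's theorem, which the paper leaves implicit.
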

 
 The theory of reductive monoids affords us an explicit description of $\mathcal{B}(M)$ purely in monoid-theoretic terms. 
 
 \begin{definition}
 A spherical variety $X$ is \textit{toroidal} if $D(X)$ is empty.
 \end{definition}
 
 \begin{proposition}
 Suppose the spherical variety $X$ is toroidal and let $\partial X  = X \setminus G$. Let $P_X$ be the $G \times G$ stabilizer of $\partial X$. Then $P$ is a parabolic and moreover satisfies the local structure theorem, i.e. there is a Levi $L \subset P$, depending only on $G$ and a closed $L$-variety $Z$ such that
 
 \[
 P_u \times Z \to X \setminus \partial X
 \]
is an isomorphism. Moreover, $Z$ is a toric variety under L/[L,L].
 \end{proposition}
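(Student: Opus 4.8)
The plan is to invoke the local structure theorem for spherical varieties in the toroidal case, which is by now standard (see Brion--Luna--Vust, or \cite{knop1991luna}), and then identify the Levi and the slice $Z$ concretely using the colorless condition $D(X) = \emptyset$. First I would recall that for any simple spherical $G$-variety $X$ with open orbit $G\cdot x_0 \cong G/H$, the set of colors $D(X)$ together with the $B$-stable prime divisors determines a parabolic $P_X \supseteq B$ as follows: $P_X$ is the stabilizer in $G$ of the open $B$-orbit, equivalently $\bigcap_{D \in D(X)} P(D)$ where $P(D)$ is the stabilizer of $D$; when $D(X) = \emptyset$ this intersection is all of $G$ on each factor, but the relevant parabolic for the boundary is the one stabilizing $\partial X$, which is cut out by the $G$-stable divisors and the geometry of the fan. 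Concretely, since $X$ is toroidal the colored cone has no colors, so $X$ is obtained by a purely toric construction over the open orbit, and $P_X = \mathrm{Stab}_{G\times G}(\partial X)$ is the parabolic whose roots are those $\alpha$ for which $s_\alpha$ does not move the generic point of $\partial X$; this is the standard parabolic associated to the face of the cone.

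Next I would produce the decomposition $P_u \times Z \xrightarrow{\sim} X \setminus \partial X$. The key input is the affine local structure theorem: for $X$ affine spherical and toroidal, pick the open $B$-orbit; its complement in the set-theoretic sense inside $X_0 := X \setminus \partial X$ is a union of colors, which is empty by the toroidal hypothesis, so the open $B$-orbit is dense in $X_0$ and in fact $X_0$ is the $G$-saturation. One then takes $L \subset P_X$ to be the Levi containing $T$, sets $Z := \overline{L \cdot x_0} \cap X_0$ (the closure of the $L$-orbit through the base point, intersected with the open part), and checks that the natural multiplication map $P_u \times Z \to X_0$ is an isomorphism — this is exactly the content of the local structure theorem of Brion--Luna--Vust / Knop, and I would cite it rather than reprove it. The statement that $L$ depends only on $G$ (not on $X$) comes from the fact that for a \emph{toroidal} variety with no colors the parabolic $P_X$ is forced: its unipotent radical is spanned by the root subgroups $U_\alpha$ with $\alpha$ a root such that $\langle \lambda, \alpha^\vee\rangle = 0$ for all $\lambda \in \Lambda_X$ lying in the relevant face, and in the reductive monoid setting the cone is full-dimensional in $X_*(T)\otimes\bQ$, forcing $P_X = G \times G$ on the nose or, for the boundary divisor stabilizer, a fixed standard parabolic determined by which simple reflections fix $0$.

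Finally I would identify $Z$ as a toric variety under $L/[L,L]$. Since $Z = P_u \backslash X_0$ carries a residual action of $L$, and $[L,L]$ acts on $Z$ with a dense orbit that is a single point (because $X_0/U \times U^{\mathrm{op}}$ was already shown, via Theorems 3.4 and 3.5 and the surrounding discussion, to be the affine toric variety attached to Renner's cone), the $[L,L]$-action on $Z$ is trivial; hence $Z$ is a $T \cong L/[L,L]$-variety, and by the discussion preceding Remark 3.6 it is precisely the affine toric embedding $\overline{T}$ determined by the weight monoid $\Lambda_X$, which is normal when $X$ is. The main obstacle, and the step I would be most careful about, is not the construction of the map $P_u \times Z \to X_0$ — that is a direct application of the cited local structure theorem — but verifying that the toroidal hypothesis genuinely forces $Z$ to be acted on by $L/[L,L]$ and not merely by $L$: one must rule out a nontrivial semisimple part acting on the slice, and this is exactly where the colorlessness ($D(X) = \emptyset$) is used, since a color in $D(X:Z)$ would contribute a nontrivial $[L,L]$-stable divisor to $Z$. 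I would spell out that implication and otherwise defer to \cite{knop1991luna,brion1989spherical} for the local structure theorem itself.
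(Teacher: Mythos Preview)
The paper does not supply a proof of this proposition: it is stated as a known structural result from the theory of spherical varieties (the local structure theorem in the toroidal case), and the paper immediately passes to its consequences, noting that for a reductive monoid $M$ one recovers Renner's open cell $U^{\mathrm{op}} \times \overline{T} \times U$ with $P_M = B \times B^{\mathrm{op}}$ and $Z = \overline{T}$. Your proposal is therefore not competing with an argument in the paper --- you have correctly identified the result as the Brion--Luna--Vust / Knop local structure theorem and pointed to the right references \cite{knop1991luna, brion1989spherical}, which is exactly how the paper treats it.

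A few cautions about your sketch, in case you develop it further. First, your description of $P_X$ wobbles: you write $P_X = \bigcap_{D \in D(X)} P(D)$ and then note that for $D(X)=\emptyset$ this gives all of $G$, before switching to ``the stabilizer of $\partial X$''. In the standard formulation the relevant parabolic is the stabilizer of the open $B$-orbit (equivalently of the union of colors in the open $G$-orbit), and it is this parabolic --- not the stabilizer of the whole boundary --- that appears in the local structure theorem; for reductive monoids under the $G\times G$-action it is $B \times B^{\mathrm{op}}$, as the paper records. Second, the target of the isomorphism in the usual local structure theorem is a $P$-stable affine open subset of $X$ that meets the closed orbit, not literally $X \setminus \partial X = G$; indeed $Z = \overline{T}$ already contains boundary points. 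The paper's statement is slightly informal on this point, and your sketch inherits that imprecision when you set $X_0 = X \setminus \partial X$. Third, your explanation of why $[L,L]$ acts trivially on $Z$ is the right idea but phrased loosely: the clean statement is that in the toroidal case the slice $Z$ meets only $G$-stable divisors, so the $L$-action on $Z$ factors through the torus $L/[L,L]$, whence $Z$ is a toric variety --- this is precisely where colorlessness enters, as you say.
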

 
As a consequence of the above isomorphism, the $L$ orbits of $Z$ correspond to $G$ orbits in $X$. Note that when $X=M$ is a reductive monoid, this is precisely Renner's extension theorem \cite{renner2006linear} with $P_M = B \times B^{\text{op}}$ and $Z = \overline{T}$. The proposition implies that the singularities of $X$ are those determined by the cone of the toric variety $Z$.

Let us recall Renner's extension theorem for normal reductive monoids. It states that a morphism of reductive monoids $M \to M'$ is given by the data $(G,T,\overline{T})$ and $(G',T',\overline{T'})$ and a morphism $\varphi: M \to M'$ is equivalent to $\varphi | G \to G'$ and $\tau | \overline{T}: \overline{T} \to \overline{T'}$. Briefly, in $M$ one has an analogue of the open cell which is the image of an open embedding $U^{\mathrm{op}} \times \overline{T} \times U \to U^{\mathrm{op}}\overline{T}U$ which has codimension $\geq 2$ in $M$. Thus one gets an equivariant morphism $(u',t,u) \mapsto \varphi(u')\tau(t)\varphi(u) \in M'$. By normality of $M$, the codimension $\geq 2$ condition extends the map uniquely to $M \to M'$, and one verifies this is in fact a morphism of monoids.

\begin{remark}
The above map in Renner uses an open Bruhat cell analog in the context of monoids, which yields a structure theorem parallel to Proposition $3.16$. More generally, $G$-equivariant dominant morphisms $\varphi: Y \to Y'$ of spherical varieties are in bijection with linear maps $\varphi_*: X_{*}(Y) \otimes \mathbb{Q} \to X_{*}(Y') \otimes \mathbb{Q}$, where $X_{*}(Y)$ and $X_{*}(Y')$ are the lattices of co-weights of the underlying group $G$, such that the image of the colored fan $\mathcal{C}_Y$ is contained within $\mathcal{C}_{Y'}$.

\end{remark}

In view of the above structure theory,  the singularities of a spherical $G$ variety $X$ are determined by those of its associated toric variety $X//U \cong \overline{T}$. Smooth toric varieties are classified as follows.

\begin{theorem}
An affine toric variety $\overline{T}$ is smooth if and only if the extremal rays of the rational polyhedral cone generated by its weight lattice is a basis for for the character lattice $X^{*}(T)$.
\end{theorem}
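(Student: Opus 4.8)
The plan is to translate the statement into the language of toric varieties and deduce it from the standard smoothness criterion, which I will prove from the definition via the Zariski cotangent space at the torus-fixed point. Write $\overline{T} = \Spec\,k[S]$, where $S = \sigma^{\vee} \cap X^{*}(T)$ is the weight semigroup and $\sigma \subset X_{*}(T) \otimes \bR$ is the strictly convex rational polyhedral cone attached to $\overline{T}$ (strict convexity being available in our situation, cf.\ the discussion preceding the construction of $M_{T}$); thus the cone generated by $S$ is $C := \sigma^{\vee}$, which is full-dimensional in $X^{*}(T)\otimes\bR$. Consequently $S$ generates $X^{*}(T)$ as a group and $\overline{T}$ has a unique torus-fixed point $x_{0}$, whose maximal ideal is the homogeneous ideal $\mathfrak{m} = \bigoplus_{u \in S \setminus \{0\}} k\,\chi^{u}$ of $k[S] = \bigoplus_{u \in S} k\,\chi^{u}$. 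Since the singular locus of $\overline{T}$ is closed and $T$-stable, and $x_{0}$ lies in the closure of every $T$-orbit, $\overline{T}$ is smooth if and only if it is smooth at $x_{0}$, i.e.\ if and only if $\dim_{k}\mathfrak{m}/\mathfrak{m}^{2} = \dim \overline{T} = \operatorname{rank} X^{*}(T) =: n$.

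For the ``if'' direction, suppose the primitive generators $u_{1},\dots,u_{n}$ of the extremal rays of $C$ form a $\bZ$-basis of $X^{*}(T)$ (in particular $C$ has exactly $n$ rays and is simplicial). Given $m \in S$, write $m = \sum_{i} c_{i}u_{i}$; then $c_{i} \in \bZ$ because the $u_{i}$ are a lattice basis, and $c_{i} \geq 0$ because $m \in C = \sum_{i}\bR_{\geq 0}u_{i}$ with the $u_{i}$ linearly independent. Hence $S = \bigoplus_{i=1}^{n}\bZ_{\geq 0}u_{i}$, so $k[S] \cong k[X_{1},\dots,X_{n}]$ and $\overline{T} \cong \bA^{n}$ is smooth.

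For the ``only if'' direction --- the substantive one --- note that $\mathfrak{m}^{2}$ is the span of those $\chi^{u}$ with $u = u' + u''$, $u',u'' \in S\setminus\{0\}$; since the decomposition of $k[S]$ is by the grading by $S$, the classes of the $\chi^{u}$ with $u$ in the Hilbert basis $H$ of $S$ (finite, by Gordan's lemma) form a $k$-basis of $\mathfrak{m}/\mathfrak{m}^{2}$. If $\overline{T}$ is smooth then $|H| = n$. Now the primitive lattice point on each extremal ray of $C$ is indecomposable in $S$ --- any expression of it as $u' + u''$ with $u',u'' \in C$ forces $u',u''$ onto the same ray, contradicting primitivity --- hence lies in $H$; so $C$ has at most $|H| = n$ extremal rays, and being full-dimensional it has exactly $n$, is simplicial, and $H$ is precisely the set of primitive ray generators $u_{1},\dots,u_{n}$. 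Finally $H$ generates $S$ as a semigroup, hence $X^{*}(T)$ as a group; an $n$-element generating set of the free abelian group $X^{*}(T)$ of rank $n$ is a $\bZ$-basis, so $u_{1},\dots,u_{n}$ is a basis of $X^{*}(T)$, as claimed. (This is essentially Theorem 1.3.12 of \cite{cox2011toric}.)

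The main obstacle is the ``only if'' direction, and inside it two points: identifying $\mathfrak{m}^{2}$ exactly with the span of the decomposable monomials so that the Hilbert basis computes the cotangent space, and upgrading ``$H$ generates $S$ as a semigroup'' to ``$H$ is a lattice basis of $X^{*}(T)$'' --- the latter is exactly where full-dimensionality of $C$ (equivalently, strict convexity of $\sigma$) enters, and without a torus-fixed point one obtains only that the ray generators extend to part of a $\bZ$-basis.
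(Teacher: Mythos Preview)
Your proof is correct. The paper, however, does not prove this theorem at all: it is stated as Theorem~3.18 without proof, as a standard input from the theory of toric varieties (essentially Theorem~1.3.12 of \cite{cox2011toric}, as you yourself note), and is used only as a black box to feed into the construction of the equivariant resolution in Theorems~3.19 and~3.20. So there is no ``paper's own proof'' to compare against.

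Your argument is the standard one and is cleanly executed: reduce to the torus-fixed point via $T$-stability of the singular locus, identify $\mathfrak{m}/\mathfrak{m}^{2}$ with the span of the Hilbert basis of $S$, and then in the nontrivial direction count rays against $|H|=n$ using full-dimensionality of $\sigma^{\vee}$ to force simpliciality and hence that the primitive ray generators exhaust $H$ and form a $\bZ$-basis. The one point worth flagging explicitly is that you are implicitly using $\sigma$ strictly convex (equivalently $\sigma^{\vee}$ full-dimensional) to guarantee both the existence of the fixed point $x_{0}$ and the inequality ``number of extremal rays $\geq n$''; you do note this, but it is the hinge of the argument and in the paper's context it is supplied by the existence of the character $\nu$ with $\langle \nu,\omega\rangle = 1$ for all weights $\omega$ of $\rho$.
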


As a general toric variety is glued from affine toric varities, it is given by a fan consisting of rational polyhedral cones. Thus a toric variety is smooth if and only if its fan consists of rational polyhedral cones whose extremal rays generate (as a $\mathbb{Z}$-module) $X^{*}(T)$. Moreover any peicewise linear morphism of rational polyhedral cones $\mathcal{C} \to \mathcal{C}'$ defines a $T$-equivariant morphism of toric varietie, thus, by Theorem $3.18$, one obtains an algorithm giving a resolution of singularities of a toric variety $\overline{T}$.

\begin{theorem}
Let $\sigma^{\vee}=\mathcal{C}(\overline{T}) \cap X^{*}(T)=$ the monoid of weights of $\overline{T}$. Starting from an extremal ray $\mathcal{C}(\overline{T})$, successively subdividing the cone such that each resulting cone is smooth, defines a smooth fan consisting of smooth cones $\tilde{\sigma}^{\vee}$ such that the inclusion map $\tilde{\sigma}^{\vee} \hookrightarrow \sigma^{\vee}$ defines a canonical $T$-equivariant resolution of singularities $\widetilde{T} \to \overline{T}$.
\end{theorem}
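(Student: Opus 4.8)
The plan is to translate the statement into the fan language of toric geometry and run the classical combinatorial resolution of toric singularities. Put $N=X_*(T)$, $M=X^*(T)$, and let $\sigma:=\mathcal{C}(\overline T)^{\vee}\subset N_{\bR}$ be the strongly convex rational polyhedral cone dual to the weight cone, so that $\overline T=\Spec k[\sigma^{\vee}\cap M]$ with $\sigma^{\vee}=\mathcal{C}(\overline T)$; normality of $\overline T$ is exactly the condition that $\sigma^{\vee}\cap M$ be saturated, which identifies $\overline T$ with the \emph{normal} affine toric variety attached to $\sigma$. By Theorem 3.18 a toric variety is smooth precisely when every cone of its defining fan is \emph{regular}, i.e.\ its primitive ray generators extend to a $\bZ$-basis of the sublattice they span. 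Hence the task reduces to producing a subdivision $\widetilde\Sigma$ of $\sigma$ with $|\widetilde\Sigma|=|\sigma|$, all of whose cones are regular: since a refinement of $\sigma$ only enlarges dual cones, one has $\sigma^{\vee}\subseteq\tilde\tau^{\vee}$ for every maximal $\tilde\tau\in\widetilde\Sigma$, so the inclusions of weight monoids $\sigma^{\vee}\cap M\hookrightarrow\tilde\tau^{\vee}\cap M$ glue the smooth charts $\Spec k[\tilde\tau^{\vee}\cap M]$ (the ``$\tilde\sigma^{\vee}$'' of the statement) into a toric variety $\widetilde T$ together with a morphism $\pi\colon\widetilde T\to\overline T$.

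I would build $\widetilde\Sigma$ by iterated star subdivision at rays through interior lattice points of the maximal cones, in two stages. First, reduce to the simplicial case --- a standard fact (\cite{cox2011toric}), provable by induction on dimension, refining the facets of $\sigma$ to simplicial fans and coning off from an interior ray. Then resolve a simplicial cone $\tau$ with primitive ray generators $u_1,\dots,u_d$ by tracking its \emph{multiplicity} $\operatorname{mult}(\tau):=[\,N\cap\bR\tau:\bZ u_1+\dots+\bZ u_d\,]$, a positive integer equal to $1$ exactly when $\tau$ is regular. If $\operatorname{mult}(\tau)>1$, the half-open parallelepiped $\{\sum t_iu_i:0\le t_i<1\}$ contains a nonzero lattice point $v$; star-subdividing $\tau$ along $\bR_{\ge0}v$ replaces it by the simplicial cones gotten by substituting $v$ for some $u_i$ (those $i$ with $t_i>0$), whose multiplicity is $\le t_i\cdot\operatorname{mult}(\tau)<\operatorname{mult}(\tau)$. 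Carrying this out at a maximal cone of largest multiplicity and iterating, the largest multiplicity occurring in the fan strictly decreases after finitely many steps; being a positive integer it reaches $1$, and $\widetilde\Sigma$ is smooth. Because every subdivision is performed at an interior point, proper faces of $\sigma$ (and, inductively, of the cones produced along the way) are left untouched; since a face of a regular cone is regular, $\widetilde T$ is smooth and the construction restricts to the analogous one on each face of $\sigma$.

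It then remains to see that $\pi$ is a resolution: it is $T$-equivariant, being induced by a refinement of fans lying over $\operatorname{id}_N$; birational, since $T$ is the open dense orbit of both $\widetilde T$ and $\overline T$ and $\pi|_T=\operatorname{id}_T$; and proper, since $|\widetilde\Sigma|=|\sigma|$, by the standard support criterion for properness of a toric morphism. As $\widetilde T$ is smooth, $\pi$ is a $T$-equivariant resolution of singularities, compatible with the closed embeddings $\overline T_L\hookrightarrow\overline T$ coming from Levi subgroups (which correspond to faces of $\sigma$) by the previous paragraph. Canonicity is obtained by fixing in advance a generic linear form on $N_{\bR}$ and always performing the star subdivisions above at the lattice point smallest with respect to it, so that no choices remain --- this is the canonical toric resolution of \cite{cox2011toric}. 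I expect the only genuinely non-formal point to be the termination of the second stage, namely that $\operatorname{mult}$ strictly drops under a parallelepiped star subdivision, together with the bookkeeping that keeps the successive subdivisions compatible across faces; everything else is the formal dictionary between fans and toric varieties.
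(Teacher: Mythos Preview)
Your proposal is correct and is precisely the standard combinatorial argument for toric resolution of singularities (simplicialize, then iterate star subdivisions at interior lattice points of the fundamental parallelepiped until all multiplicities drop to $1$, then invoke the support criterion for properness). There is nothing to object to mathematically.

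However, you should be aware that the paper does not actually supply a proof of this theorem. It is stated as a classical fact from toric geometry, with only the sentence preceding it serving as justification: smoothness is characterized combinatorially by Theorem~3.18, piecewise-linear maps of cones induce $T$-equivariant morphisms, and hence subdividing until every cone is smooth yields a resolution; the details are implicitly deferred to \cite{cox2011toric}. So rather than taking a ``different route,'' you have written out in full the argument the paper only gestures at. Your treatment of termination via the multiplicity drop, and of properness via $|\widetilde\Sigma|=|\sigma|$, are exactly the points the paper leaves to the reference. The remark about compatibility with faces (and hence with the Levi sub-toric varieties $\overline T_L$) is a nice bonus not present in the paper's statement, and your reading of ``canonical'' as requiring a choice-elimination device (a fixed generic linear form) is more careful than the paper, which seems to use the word loosely to mean ``the standard one.''
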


Finally, one may define a $G$-equivariant resolution from Remark $3.17$ and Theorems $3.18$ and $3.19$:

\begin{theorem}
Let $M$ be a reductive monoid. Then there exists a smooth spherical $G$-variety $\tilde{M}$ that is toroidal, and a proper $G$-equivariant morphism $\varphi: \tilde{M} \to M $
\end{theorem}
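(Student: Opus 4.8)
The plan is to read the resolution off the combinatorics through the Luna--Vust dictionary, regarding $M$ as a spherical embedding of its open orbit $G$ for the action of $G\times G$ by left and right translations (this is what Theorem 3.20 refers to as ``$G$-equivariant''). Let $(\mathcal C(M),D(M))$ be the colored cone of $M$ provided by Theorem 3.15: it sits in $X_*(T)\otimes\mathbb Q$, the cone $\mathcal C(M)$ is strictly convex polyhedral, and $D(M)$ is the full set of colors $\overline{B s_\alpha B^{\mathrm{op}}}$ indexed by the simple roots (Example 3.13). I will construct $\tilde M$ in two steps: first decolor $M$ to a toroidal model, then resolve that model by a toric subdivision.

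First I would carry out the decoloration. Replacing $(\mathcal C(M),D(M))$ by the colorless colored cone obtained by intersecting $\mathcal C(M)$ with the valuation cone $V(G)$ of the homogeneous space $G$ and discarding all colors defines, via the correspondence recalled in Remark 3.17, a toroidal spherical $G\times G$-variety $M^{\mathrm{tor}}$ together with a birational $G\times G$-equivariant morphism $M^{\mathrm{tor}}\to M$ that is the identity over the open orbit $G$. This morphism is proper: passing to the decolored (toroidal) model of a spherical variety always yields a proper morphism onto it, a standard feature of Luna--Vust theory \cite{knop1991luna}, the essential point being that the part of $\mathcal C(M)$ lying outside $V(G)$ is precisely the part carried by the colors $D(M)$, so no $G\times G$-orbit is lost. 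Note that $M^{\mathrm{tor}}$ is in general neither a monoid nor affine, which is exactly why one needs the spherical-variety reformulation of Renner's extension theorem (Remark 3.17) rather than Renner's theorem itself.

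Next I would resolve $M^{\mathrm{tor}}$. Since $M^{\mathrm{tor}}$ is toroidal, Proposition 3.16 identifies its singularities with those of its associated affine toric variety $\overline T$, whose defining cone is $W$-stable. Theorem 3.19 applied to $\overline T$ produces a canonical subdivision of this cone into smooth cones, and since the procedure is canonical and the cone is $W$-stable the subdivision is $W$-stable, so by Theorem 3.18 it defines a $W$-equivariant toric resolution $\tilde T\to\overline T$. Feeding the colorless subdivided fan back through Remark 3.17 now produces a spherical $G\times G$-variety $\tilde M$ with a $G\times G$-equivariant morphism $\tilde M\to M^{\mathrm{tor}}$ which is toroidal (the color set remains empty), proper (it is induced by a subdivision of a colorless fan with unchanged support, exactly as in the toric case), and smooth, since by Proposition 3.16 the singularities of $\tilde M$ coincide with those of $\tilde T$, which has none by Theorem 3.18. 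Composing, $\varphi\colon\tilde M\to M^{\mathrm{tor}}\to M$ is a proper $G\times G$-equivariant morphism from a smooth toroidal spherical variety, which is what is required.

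The hard part is the decoloration step, and specifically the properness of $M^{\mathrm{tor}}\to M$: unlike a toric subdivision it genuinely shrinks the supporting cone (from $\mathcal C(M)$ to $\mathcal C(M)\cap V(G)$), so properness cannot be read off from a naive ``same support'' comparison and must instead be extracted from the Luna--Vust completeness/properness criterion, checking that the discarded cone $\mathcal C(M)\setminus V(G)$ is accounted for exactly by the colors $D(M)$. A secondary technical point, needed so that resolving the toric slice $\overline T$ genuinely yields an equivariant resolution of the whole variety rather than only of the slice, is the $W$-equivariance of the canonical toric resolution of Theorem 3.19.
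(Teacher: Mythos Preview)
Your approach is essentially the same as the paper's: both pass through the Luna--Vust dictionary, delete the colors of $\mathcal C_M$, and then subdivide into smooth cones, invoking Proposition~3.16 to transfer smoothness from the toric slice to $\tilde M$ and Remark~3.17 to produce the equivariant morphism. The paper compresses your two steps (decoloration, then toric subdivision) into a single sentence---``deleting all colors, construct a fan $\tilde{\mathcal C}_M$ generated by a subdivision of $\mathcal C_M$ into smooth colorless cones''---and does not explicitly verify properness, whereas you correctly isolate the properness of the decoloration map as the nontrivial point and appeal to the Luna--Vust criterion for it. Your remark on $W$-stability of the subdivision is also a genuine detail the paper's proof leaves implicit.
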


\begin{proof}

Take the colored cone $\mathcal{C}_M$ determed by $M$. Deleting all colors, construct a fan $\tilde{\mathcal{C}}_M$ generated by a subdivision of $\mathcal{C}_M$ into smooth colorless cones (which always exists, see \cite{cox2011toric}). Then $\tilde{T} \to \overline{T} \subset M$ is a resolution of toric varieties, and $\tilde{M}$ has an affine chart given by the open cell $U^{\textrm{op}} \times \tilde{T} \times U$ by Proposition $3.16$, and therefore is smooth. The natural inclusion of $\tilde{\mathcal{C}}_M$ into $\mathcal{C}_M$ defines a dominant $G$-equivariant morphism $\tilde{M} \to M$, giving us our resolution.
\end{proof}

\begin{example}
The case of $G=GL_2$ and $\rho = \mathrm{Sym}^n$ monoids are determined by their respective toric varieties $\{ xy - z^n=0 \} \cong \mathbb{A}^2/ C_n$, which are realized finite quotients of $\mathbb{A}^2$ by cyclic group $C_n$ of order $n$. These are the well known finite quotient surface singularities of type $A_n$ and their resolution are give by $[\frac{n}{2}]$ blow-ups at the origin. Likewise, the resolution of $M^{\rho}$ is given by $[\frac{n}{2}]$-blow-ups at $0$ of $M^{\rho}$.

\end{example}

Next we discuss the rationality of singularities. Recall that $X$ has rational singularities if a (and hence any) resolution $r:\widetilde{X} \to X$ has vanishing higher cohomology: $R^{i}r_{*}\mathcal{O}_{\widetilde{X}} = 0$ if $i > 0$. Moreover, recall that for a resolution $r: \widetilde{X} \to X$ the fiber over the singular locus $X^{\mathrm{sing}} = r^{-1}(X^{\mathrm{sing}})=E$ is the exceptional divisor. 

\begin{theorem}
Let $X$ be a normal variety with rational singularities. Let $j:X^{\text{sm}} \hookrightarrow X$ denote the embedding of the smooth locus of $X$ into $X$. If we define $\omega_X := j_{*}(\omega_{X^{\text{sm}}})$, by extending (uniquely, by normality) algebraic top-dimensional differentials form to $X$. If $r:\widetilde{X} \to X$ is any resolution of singularities, then $r^{*}\omega_{X}$ extends over $E$ to an algebraic differential form on $\widetilde{X}$.
\end{theorem}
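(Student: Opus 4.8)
The plan is to reduce the statement to a purely local computation about discrepancies, and to use the rationality of singularities as the input that controls the sign of the discrepancy coefficients. First I would set up the comparison of canonical sheaves: since $r : \widetilde{X} \to X$ is a resolution and $X$ is normal, there is a well-defined ramification divisor supported on $E$, and on the smooth locus $X^{\mathrm{sm}}$ (whose preimage $r$ restricts to an isomorphism away from $E$) the pullback $r^{*}\omega_{X}$ agrees with $\omega_{\widetilde{X}}$. Hence $r^{*}\omega_{X}$, a priori only a rational section of $\omega_{\widetilde{X}}$, differs from a genuine algebraic top-form by the divisor $K_{\widetilde{X}} - r^{*}K_{X} = \sum_{i} a_i E_i$ where the $E_i$ are the irreducible components of $E$ and $a_i \in \mathbb{Z}$ are the discrepancies. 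The claim ``$r^{*}\omega_X$ extends over $E$'' is then exactly the assertion that all $a_i \geq 0$, i.e. that $X$ has at worst canonical singularities in the classical sense.

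Next I would invoke the standard equivalence, in characteristic zero, between rational singularities and the Grauert--Riemenschneider / duality picture: for a resolution $r$ of a normal variety with rational singularities, one has $r_{*}\omega_{\widetilde{X}} = \omega_{X}$ (Grauert--Riemenschneider vanishing together with the hypothesis $R^{i}r_{*}\mathcal{O}_{\widetilde{X}} = 0$ for $i>0$, which by Grothendieck duality dualizes to $R^{i}r_{*}\omega_{\widetilde{X}} = 0$ for $i>0$ and $r_{*}\omega_{\widetilde{X}} = \omega_{X}$). Using the projection formula and the fact that $\omega_X = j_{*}\omega_{X^{\mathrm{sm}}}$ is reflexive of rank one, I would then argue that the inclusion $r_{*}\omega_{\widetilde{X}} \hookrightarrow r_{*}(\omega_{\widetilde{X}}(\textstyle\sum_i b_i E_i))$ for $b_i \geq 0$ forces the discrepancies to be nonnegative: concretely, a local generator of $\omega_X$ near a singular point pulls back, by the equality $r_{*}\omega_{\widetilde{X}} = \omega_X$, to a section of $\omega_{\widetilde{X}}$ with no poles along any $E_i$. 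This is precisely the desired extension statement. In the monoid case one could alternatively give the toric/spherical proof: by Remark $3.14$ and the local structure theorem (Proposition $3.16$), $\omega_{\widetilde{M}}$ and $\omega_{M}$ are $G \times G$-equivariant, hence determined by their restriction to the open cell $U^{\mathrm{op}} \times \overline{T} \times U$, so the extension question descends to the toric variety $\overline{T}$, where discrepancies are computed combinatorially from the subdivision of the cone and canonicity is automatic since the toric canonical divisor is $-\sum (\text{prime invariant divisors})$ and subdivision only adds rays lying inside the cone.

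The step I expect to be the main obstacle is the passage from the cohomological definition of rational singularities ($R^{i}r_{*}\mathcal{O}_{\widetilde{X}} = 0$) to the statement $r_{*}\omega_{\widetilde{X}} = \omega_{X}$ and thence to nonnegativity of discrepancies; this requires Grothendieck--Serre duality for the proper morphism $r$ together with Grauert--Riemenschneider vanishing, and one must be careful that $\omega_X$ defined as $j_{*}\omega_{X^{\mathrm{sm}}}$ really is the dualizing sheaf (true here since $X$ is normal and Cohen--Macaulay — rationality implies Cohen--Macaulay in characteristic zero). Once $r_{*}\omega_{\widetilde{X}} = \omega_{X}$ is in hand, the extension assertion is formal: any local section of $\omega_X$, viewed via this identification as a section of $r_{*}\omega_{\widetilde{X}}$, is by definition a regular top-form on $\widetilde{X}$, which is what $r^{*}\omega_X$ must be. I would present the argument in this order: (1) reduce to nonnegativity of discrepancies via the ramification divisor; (2) establish $r_{*}\omega_{\widetilde{X}} = \omega_X$ from rationality plus duality plus Grauert--Riemenschneider; (3) conclude; and optionally (4) remark on the alternative toric verification for reductive monoids.
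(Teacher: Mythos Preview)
The paper does not actually give a proof of this theorem: it is stated as a known result from algebraic geometry (the classical fact that rational singularities are canonical, going back to Kempf and Elkik), and the paper moves on immediately to the next theorem. So there is no ``paper's own proof'' to compare against.

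Your proposal is the standard argument and is correct. The key step is exactly step~(2)--(3): rational singularities (plus Grauert--Riemenschneider and duality, or directly Kempf's criterion) give $r_{*}\omega_{\widetilde{X}} = \omega_{X}$, and then any local section of $\omega_X$ is, tautologically, a regular section of $\omega_{\widetilde{X}}$ on the preimage, which agrees with the rational pullback off $E$ and hence furnishes the desired extension. One small comment: your step~(1), rewriting the extension as nonnegativity of discrepancies $a_i \geq 0$, only makes literal sense when $K_X$ is $\mathbb{Q}$-Cartier so that $r^{*}K_X$ is defined as a divisor; you never actually use this formulation in the argument, since step~(3) bypasses it entirely via $r_{*}\omega_{\widetilde{X}} = \omega_X$. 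So you could streamline by dropping~(1) and going straight from the sheaf identity to the conclusion. The optional toric/spherical verification in~(4) is a nice touch and is in the spirit of how the paper treats the surrounding material (reducing to the toric slice via the local structure theorem), though again the paper itself does not spell out a proof for this particular statement.
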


\begin{theorem}
Normal spherical varieties, and hence reductive monoids have rational singularities.
\end{theorem}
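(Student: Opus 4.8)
The plan is to reduce the statement about normal spherical varieties to the toric case via the structure theory developed above, and then invoke the known fact that normal toric varieties have rational singularities. The key bridge is the toric degeneration map $X \to X//U$ from Remark~3.7: since this map is faithfully flat with $X//U \cong \overline{T}$ a (normal, if $X$ is normal) affine toric variety, rationality of singularities for $X$ can be deduced from rationality for $\overline{T}$ by a flat base change argument in the spirit of Elkik~\cite{elkik1978singularites}. Since a reductive monoid $M$ is in particular a normal spherical $G \times G$-variety, the statement for reductive monoids follows as a special case.

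**First I would** recall the definition: a normal variety $X$ has rational singularities if for one (equivalently any) resolution $r : \widetilde{X} \to X$ one has $r_*\mathcal{O}_{\widetilde X} = \mathcal{O}_X$ (automatic by normality) and $R^i r_* \mathcal{O}_{\widetilde X} = 0$ for $i > 0$. **Next I would** reduce to the affine simple case: by Proposition~3.13 any $G$-variety is covered by simple $G$-varieties, and since rationality of singularities is local on the base, it suffices to treat affine simple spherical varieties. For such an $X$, the local structure theorem (Proposition~3.16, applied after first passing to a toroidal resolution à la Theorem~3.20, or directly via the contraction $X \to X//U$) identifies the question with the toric variety $\overline{T} = X//U$: concretely, one has the faithfully flat contraction $\pi : X \to \overline{T}$, and one constructs a resolution $\widetilde{X} \to X$ compatibly with a toric resolution $\widetilde{T} \to \overline{T}$ so that the square commutes and the left vertical map is flat. **Then** by flat base change $R^i r_* \mathcal{O}_{\widetilde X}$ is obtained by pulling back $R^i(\widetilde{T} \to \overline{T})_* \mathcal{O}_{\widetilde T}$, which vanishes for $i>0$ because affine toric varieties have rational singularities (a standard fact, e.g.\ via the explicit combinatorial resolution of Theorem~3.19 together with a vanishing theorem, or via the fact that toric varieties are Cohen--Macaulay with canonical — indeed log terminal — singularities).

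**The main obstacle** is making the flat base change argument rigorous: the contraction $X \to \overline{T}$ is flat, but one must arrange resolutions on both sides that are compatible with it, i.e.\ one needs that $\widetilde{X} \cong X \times_{\overline T} \widetilde{T}$ (or at least that a resolution of the fiber product suffices) and that this fiber product is again flat over $\widetilde T$ with the right properties; controlling the smoothness of $X \times_{\overline T} \widetilde T$ requires knowing the generic fiber of $\pi$ is smooth (it is $\cong U^{-} \times (\text{smooth } L\text{-part})$ by the local structure theorem) and that flatness is preserved, which is where Remark~3.7's assertion, following \cite{elkik1978singularites}, does the real work. **A secondary point** to handle carefully is passing from the affine simple pieces back to a general spherical $X$: one must check that the resolutions glue, which follows from functoriality of the combinatorial (colored fan) description and Remark~3.17. **Finally**, once rationality is established, Theorem~3.21 applies, so one obtains as a corollary that $r^*\omega_M$ extends to an algebraic top form on any resolution $\widetilde{M}$ — the fact needed in Section~4 for transferring measures.
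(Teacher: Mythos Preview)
Your proposal is correct and follows essentially the same approach as the paper: the paper's proof is a terse three-sentence version of exactly what you outline, namely invoking the flat contraction $X \to X//U \cong \overline{T}$, the fact that normal toric varieties have rational singularities, and Elkik's flat base change argument \cite{elkik1978singularites} to transfer rationality back to $X$. Your added reductions (to the affine simple case) and your discussion of the obstacle in arranging compatible resolutions are reasonable elaborations of what the paper simply absorbs into the citation of Elkik; note a few cross-reference numbers are off (the toric degeneration is Remark~3.6, the covering by simple pieces is Proposition~3.7, and the differential-form extension is Theorem~3.22), but the mathematics is the same.
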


\begin{proof}
$X \to X//U \cong \overline{T}$ is a flat deformation onto a toric variety, and toric varieties have rational singularities. Rationality ``descends" under flat deformation (use flat base change, see \cite{elkik1978singularites}), hence $X$ has rational singularities.
\end{proof}

\begin{section}{Integration on singular varieties}
Let $\omega$ be a top-differential form on a reductive monoid $M$. Taking the canonical resolution as above $r:\widetilde{M} \to M$ we get a well defined differential form $r^{*}\omega$ on $\widetilde{M}$. Restricing $r$ to the open set on $\widetilde{M}$ over which $r$ is an isomorphism, we have upon passing to $k$-points

\[
\int_{\widetilde{X}(k)} |r^{*}\omega| = \int_{\widetilde{X}(k)} |\mathrm{Jac}(r)||\omega| = \int_{X(k)} |\omega|,
\]
where $\mathrm{Jac}(r)$ is the Jacobian of $r$ and $| \omega |$ is the measure constructed by Weil from the top-differential form $\omega$ on $X$.

\begin{example}
As a first basic example, we may consider the basic function on toric varieties. Recall that the basic function on a toric variety $\overline{T}$ defined by cone $\sigma$ with dual cone $\sigma^{\vee}$. Suppose the generators of the weight monoid of $\overline{T}$ are $e_1,\ldots,e_n$. The $e_i$ are co-characters $k^{\times} \to T(k)$. For each co-character $\lambda$ in the weight monoid, $\lambda(\varpi\mathscr{O}_k)$ is an open neighbood on which the value of the basic function $f^{\rho}_{\overline{T}}$ is  $\# \{ (a_i) \in \bZ^{n}_{\geq 0} \ | \sum_i a_ie_i = \lambda \}$. It is known \cite{sturmfels1995vector, casselman2017symmetric} that the function 

\[\lambda \mapsto \# \{ (a_i) \in \bZ^{n}_{\geq 0} \ | \sum_i a_ie_i = \lambda \}: \sigma \cap X_{*}(T) \to \bZ_{\geq 0}\]
as an integer valued function of the weight monoid is \textit{quasi-polynomial}, i.e., it is in the sub-algebra of functions on $X_{*}(T)$ generated by polynomials and periodic functions. In particular, this means one may dominate the basic function with a polynomial function on the lattice. 

It is also classical that locally in some coordinate system on $\widetilde{T}(k)$, the map $r: \tT(k) \to \oT(k)$ is given by a monomial transformation, i.e., has the form $(t_1,\ldots,t_n) \mapsto (t_1^{d_{11}}\cdots t_n^{d_{n1}},\ldots,t_1^{d_{n1}}\cdots t_n^{d_{nn}} )$. For example, the $\mathrm{Sym}^2$ toric variety is the cone $\{ xy-z^2=0 \}$ and is resolved affine-locally by the classical ``cylinder" resolution $(x,y,z) \mapsto (xz,yz,z)$. Therefore the $|\mathrm{Jac}(r)|$ is in local coordinates a product of the form $|t_i|^{d_i}$. For the cylinder resolution it is $|z|^2$. Thus as $\varpi^N \to 0$ the Jacobian grows as the inverse of an exponential while the basic function is dominated by polynomial growth. Hence the pullback of the differential form $r^{*}(f^{\rho}\omega)$ makes sense as $\mathrm{val}\circ \mathrm{det} \to \infty$ on $\widetilde{T}(k)$.
\end{example}

\end{section}

\section{Multiplicativity}

\bigskip

Every theory of $L$-functions is expected to satisfy ``multiplicativity'', i.e., the equality of $\gamma$-factors for parabolically inducing and induced data. 
Our goal in the rest of the paper is to establish, under natural assumptions on related Fourier transforms, the multiplicativity in the context of this theory in general. Our proof is a generalization of the standard case of Godement--Jacquet \cite{godement2006zeta}. We first need to connect Renner's construction to parabolic induction.

\bigskip
\subsection{Renner's construction and parabolic induction}   % 4

We start by observing that Renner's construction \cite{renner2006linear} respects parabolic induction. More precisely, let $P\!=\!LN$ be  a parabolic subgroup of $G$, with unipotent radical $N$ and a Levi subgroup $L$ which we fix by assuming $T\subset L$. Now $L$ is a reductive group with a maximal torus $T$ to which Renner's construction applies. Let $\rho_L=\rho |\hL$, where $\hL$ is the connected component of the $L$-group of $L$.

Now $\rho_L | \hT$ gives the same weights as $\rho | \hT$ does and thus $\rho_L$ shares the same toric variety $M_T$ coming from $\rho$. Let $W_L=W(L,T)$. Then each orbit $W\lambda_i$ breaks up to a disjoint union of orbits $W_L\lambda_j$ and thus
\[\bigoplus^s_{i=1} V_{\lambda_i}\ {\big |}_L=\bigoplus^s_{i=1}\bigoplus^{s_i}_{j=1}\ V^L_{\lambda_j},\]
where
\[V_{\lambda_i} \ {\big |} L=\bigoplus^{s_i}_{j=1}\ V^L_{\lambda_j}.\]

\bigskip\noindent
(5.1) {\it{Conclusion: The monoid $M^{\rho_L}$ attached to $\rho_L$ by Renner's construction for $L$, $L=G(M^{\rho_L})$, is the same as the closure of $L$ as a subgroup of $G$ upon action on $V=\bigoplus^s_{i=1} V_{\lambda_i}$,  i.e.}}, $M^{\rho_L}=\overline{\mu(L)}$.

\bigskip
We also need to remark that the character $\nu: G\to \bC_m$ discussed earlier, when restricted to $L$ maybe considered as the corresponding character $\nu_L$ of $L$, i.e., $\nu_L :=\nu | L$. 
In fact, $\nu^\vee$ and $\nu^\vee_L$ both take values in the centers of $\hG$ and $\hL$, respectively. The natural embedding of $\hL\subset \hG$ by means of  the root data of $\hL$ and $\hG$ which are dual to root data of $L$ and $G$ who share the maximal torus $T$, identifies $Z(\hG)$ as a subgroup of $Z(\hL)$. We therefore have the commutative diagram

\bigskip
\[
\xymatrix{
\mathbb{C}^* \ar[r]^{{\nu^\vee}}  \ar[rd]_{\nu_L^\vee} &
\hG  \ar[r]^{\!\!\!\!\!\!\! \!\!\!\rho}&
 GL_N(V\rho)  \\
&\hL\ar[u]^{} \ar[ru]_{\rho_{_L}}}
\]
and consequently $\rho_L\cdot\nu_L^\vee(z)=z\cdot \text{Id}$ as needed.

\bigskip\noindent
{\underbar{\Large{The shift in general.}}}\ To get the precise $\gamma$-factor in general one needs to shift $s$ by $\langle\eta_G,\lambda\rangle$ or $\pi\otimes |\nu|^s$ should change to $\pi\otimes |\nu|^{s+\langle\eta_G,\lambda\rangle}$, with notation as in \cite{ngo2020hankel}, where $\eta_G$ is half the sum of positive roots in a Borel subgroup of $G$ and $\lambda$ the hightest weight of $\rho$. 

In our setting, we need to deal with the representation $\rho_{_L}$ of $\hL$ as well which is not necessarily irreducible. Let $\lambda_1,\dots,\lambda_r$ be the hightest weights of $\rho_{_L}$. We may assume $\lambda_1=\lambda$. The shift will then be $\langle \eta_L, \lambda_1+\cdots+\lambda_r\rangle$. 

\bigskip
Let us define $\delta_{G,\rho}=|\nu|^{\langle 2\eta_G,\lambda\rangle}$, $\delta_{L,\rho_L}=|\nu_{_L}|^{\langle 2\eta_L,\lambda_1+\cdots+\lambda_r\rangle}$ and set
\[\nu_{G/L}=\nu_{G/L},\rho :=\delta_{G,\rho} / \delta_{L, \rho_{_L}}.\]

Finally, let $\delta_P$ be the modulus character of $P=NL$.

\bigskip

\subsection{The $\rho$--Harish--Chandra transform}  % 5

We now recall the $\rho$-Harish--Chandra transform, a generalization of Satake transform. Given $\Phi\in C^\infty_c(G(k))$, define its Harish--Chandra transform $\Phi_P\in C^\infty_c(L(k))$ by
\begin{equation}  %5.1
\Phi_P(l)=\delta^{-\frac 12}_P(l)\int\limits_{N(k)}\Phi(nl) dn.
\end{equation}
Next define the $\rho$-Harish--Chandra transform, $\rho$--HC in short, by
\begin{equation}   %5.2
\Phi^\rho_P(l)=\nu^{\frac 12}_{G/L,\rho}(l)\ \Phi_P(l).
\end{equation}

\noindent
{\underbar{\Large{Fourier transforms}}}

\bigskip

The conjectural Fourier transform (kernel) $J^\rho$ is supposed to give the $\gamma$-factor $\gamma(s,\pi,\rho)$ for every irreducible admissible  representation $\pi$ of $G(k)$ through the convolution
\begin{equation}   % 5.3
 J^{\rho} * f |\nu|^{s+\langle\eta_G, \lambda\rangle}=\gamma(s,\pi,\rho) f |\nu|^{s+\langle\eta_G,\lambda\rangle}, 
 \end{equation}
where $\pi$ is an irreducible admissible representation of $G(k)$ and $f(g)=\langle \pi(g)v,\tv\rangle$ is a matrix coefficient of $\pi$. In the context of parabolic induction from $P=NL$ we will also have $\gamma(s,\sigma, \rho_{_L})$ defined by $J^{\rho_{_L}}$. 

\bigskip\noindent
{\underbar{Fourier transforms and Schwartz spaces}}

In this section we define suitable spaces of Schwartz functions on $G(k)$ and $L(k)$, assuming how Fourier transforms $J^{\rho}$ and $J^{\rho_{_L}}$ act on $C^\infty_c(G(k))$ and $C^\infty_c(L(k))$, respectively. Then
\[J^\rho : C^\infty_c(G(k))\longrightarrow C^\infty(G(k))\]
and
\[J^{\rho_{_L}} : C^\infty_c(L(k))\longrightarrow C^\infty(L(k)).\]
Assume $J\strut^{\,\rho}$ and $J\strut^{\,\rho_{_L}}$ commute with the $\rho$--Harish--Chandra transform $\Phi^\rho_P$, i.e., 
\begin{equation}   % 5.4
 (J^{\rho} \Phi)^{\rho}_P=J^{\rho_{_L}}\Phi^\rho_P 
 \end{equation}
or the following diagram commutes
\begin{equation}   %5.5
\begin{CD}
C^\infty_c(G(k)) @>J^\rho>> J^{\rho}(C^\infty_c(G(k))\subset C^\infty(G(k))\\
@V\rho-\rm{HC}VV                 \hskip-1truein@VV{\rho}-\rm{HC}V \\
C^\infty_c(L(k))  @>J^{\rho_L}>> J^{\rho_{_L}}(C^\infty_c(L(k))\subset C^\infty(L(k)). 
\end{CD}
\end{equation}

\bigskip

We now define the Schwartz spaces $\sS^{\rho}(G)=\sS^{\rho}(G(k))$ and $\sS^{\rho_{_L}}(L)=\sS^{\rho_{_L}}(L(k))$ as follows:
\begin{equation}   %  5.6
\sS^{\rho}(G) : = C^\infty_c(G(k))+J^{\rho}(C^\infty_c(G(k))\subset C^\infty(G(k))
\end{equation}
and
\begin{equation}   % 5.7
\sS^{\rho_{_L}}(L) : = C^\infty_c(L(k))+J^{\rho_{_L}}(C^\infty_c(L(k))\subset C^\infty(L(k)).
\end{equation}

As we pointed out in the introduction, these are subspaces of the conjectural $\rho$-Schwartz spaces and suitable to our purposes. Moreover, as we prove in Proposition 5.3, they contain the $\rho$ and $\rho_L$-basic functions. We recall that the $\rho$-basic function $\phi^{\rho}$ is the unique one for which

\[
Z(\phi^{\rho},f_s)=L(s,\pi,\rho),
\]
where $f_s$ is the normalized spherical matrix coefficient of $\pi \otimes |\nu|^s$ with $Z$ defined as in $(5.11)$.

Note that $\Phi\mapsto\Phi^{\rho}_P$ sends $C^\infty_c(G(k))$ into $C^\infty_c(L(k))$ and thus (5.5) implies that it also sends
\[J^{\rho}(C^\infty_c(G(k)))\longrightarrow J^{\rho_{_L}}(C^\infty_c(L(k))).\]
 We thus have:

\bigskip

\begin{proposition}   %5.1
The $\rho$--Harish--Chandra transform $\Phi^\rho_P$  sends $\sS^\rho(G)$ into $\ds\sS^{\rho}(L)$. In particular, equation $(5.4)$ is valid for our spaces of $\rho$-Swartz functions on $G(k)$ and $L(k)$.
\end{proposition}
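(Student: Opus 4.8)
The plan is to use the defining decomposition $\sS^\rho(G)=C^\infty_c(G(k))+J^\rho(C^\infty_c(G(k)))$ together with the commutativity hypothesis, reducing the first assertion to two facts: that $\Phi\mapsto\Phi^\rho_P$ carries $C^\infty_c(G(k))$ into $C^\infty_c(L(k))$, and that it carries $J^\rho(C^\infty_c(G(k)))$ into $J^{\rho_L}(C^\infty_c(L(k)))$. Since the $\rho$-Harish--Chandra transform is linear, adding these two statements gives $\Phi^\rho_P\bigl(\sS^\rho(G)\bigr)\subseteq C^\infty_c(L(k))+J^{\rho_L}(C^\infty_c(L(k)))=\sS^{\rho_L}(L)$, which is exactly the first assertion.

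For the first of the two facts I would invoke the classical behaviour of the constant-term map: for $\Phi\in C^\infty_c(G(k))$, an Iwasawa decomposition of $G(k)$ shows that $l\mapsto\int_{N(k)}\Phi(nl)\,dn$ is locally constant and compactly supported, hence $\Phi_P=\delta_P^{-1/2}\int_{N(k)}\Phi(n\,\cdot\,)\,dn\in C^\infty_c(L(k))$; multiplying by the locally constant positive function $\nu_{G/L,\rho}^{1/2}$ leaves this in $C^\infty_c(L(k))$, so $\Phi^\rho_P\in C^\infty_c(L(k))$. This is the only genuinely analytic input. For the second fact the commutativity hypothesis does all the work: given $\Phi\in C^\infty_c(G(k))$, equation $(5.4)$ reads $(J^\rho\Phi)^\rho_P=J^{\rho_L}(\Phi^\rho_P)$, and since $\Phi^\rho_P\in C^\infty_c(L(k))$ by the previous step, the right-hand side lies in $J^{\rho_L}(C^\infty_c(L(k)))$. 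Together with the first fact, this establishes the first assertion.

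For the ``in particular'' clause I would extend $\Phi\mapsto\Phi^\rho_P$ to all of $\sS^\rho(G)$ by the rule $\Phi_1+J^\rho\Phi_2\longmapsto(\Phi_1)^\rho_P+J^{\rho_L}\bigl((\Phi_2)^\rho_P\bigr)$ for $\Phi_1,\Phi_2\in C^\infty_c(G(k))$. This is independent of the chosen representation of an element of $\sS^\rho(G)$: if $\Phi_1+J^\rho\Phi_2=\Phi_1'+J^\rho\Phi_2'$ then $\Phi_1-\Phi_1'=J^\rho(\Phi_2'-\Phi_2)$, and applying the original $\rho$-Harish--Chandra transform and $(5.4)$ to $\Phi_2'-\Phi_2\in C^\infty_c(G(k))$ yields $(\Phi_1)^\rho_P-(\Phi_1')^\rho_P=\bigl(J^\rho(\Phi_2'-\Phi_2)\bigr)^\rho_P=J^{\rho_L}\bigl((\Phi_2'-\Phi_2)^\rho_P\bigr)$, which is precisely the consistency needed. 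With this extension, equation $(5.4)$ --- equivalently the commutative diagram $(5.5)$ --- now holds verbatim on $\sS^\rho(G)$ and $\sS^{\rho_L}(L)$ by linearity, since it has been verified on $C^\infty_c(G(k))$.

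The main obstacle I anticipate lies not in this bookkeeping but in being entitled to invoke $(5.4)$ in the form used above: for $\Phi\in C^\infty_c(G(k))$ the function $J^\rho\Phi$ need not have compact support, so $(J^\rho\Phi)^\rho_P$ involves an integral over $N(k)$ whose convergence, and whose value $J^{\rho_L}(\Phi^\rho_P)$, must be guaranteed in advance. That guarantee is exactly the content of the commutativity assumption; granting it, every step above goes through, and without it both the descent and the very meaning of $\Phi^\rho_P$ on the $J^\rho$-part of the Schwartz space would break down.
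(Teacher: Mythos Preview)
Your proof is correct and follows exactly the paper's approach: the paper's argument, given in the two sentences immediately preceding the proposition, is simply that $\Phi\mapsto\Phi^\rho_P$ sends $C^\infty_c(G(k))$ into $C^\infty_c(L(k))$, and then the commutativity assumption (5.5) implies it sends $J^\rho(C^\infty_c(G(k)))$ into $J^{\rho_L}(C^\infty_c(L(k)))$. Your write-up supplies more detail than the paper does --- in particular the well-definedness check for the extension and the remark on convergence --- but the underlying reasoning is identical.
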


We remark that this definition will be needed in our proof of multiplicativity, Theorem $5.3$, the discussion after equation (5.16).

\medskip
\noindent
\begin{remark}
This definition of Schwartz spaces agrees with ideas of Braverman--Kazhdan \cite{braverman2002normalized, getz2020refined} and with the case of standard representation of $GL_n(\bC)$. To wit consider $G=GL_1$, i.e., the Tate's setting, and check it for the $\Phi_0=\text{char}(O_k)$, i.e., the corresponding ``basic function''. Let $\Phi=\text{char}(O^*_k)\in C^\infty_c(k^*)$. Now $J^\rho$ is just the standard Fourier transform
\begin{equation}    %5.8
J^{\rho}\Phi(y)=\hPhi(y)=\int\limits_k \Phi(x)\psi(tr(xy)) dx.\end{equation}

It can be easily checked that
\begin{equation*}
\begin{aligned}
\Phi_0&=\rm{char}(P^{-1}_k\setminus O_k) +\hPhi\\ 
&\in C^\infty_c(k^*)+J^\rho(C_c^\infty(k^*)),
\end{aligned}
\end{equation*}
where $P_k$ is the maximal ideal of $O_k$.
\end{remark}
This simple calculation allows us to prove the following general result:

\begin{proposition}
The space $\sS^{\rho}(G)$ contains the $\rho$-basic function.
\end{proposition}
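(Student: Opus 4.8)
The plan is to reduce the assertion to the case of the torus $M_T$, where it becomes the rank-one computation of Remark $5.2$ carried out coordinate by coordinate, using the extended Satake transform of \cite{li2017basic} as the bridge. Write $K=G(O_k)$ and let $\mathcal{H}(G,K)$ be the spherical Hecke algebra. The key structural fact about $\phi^{\rho}$ is that, although it is not compactly supported, it is bi-$K$-invariant and \emph{almost compact}: it meets only finitely many double cosets $K\varpi^{\mu}K$ in each fibre of $|\nu|$. By \cite{li2017basic} the Satake isomorphism $\mathcal{S}\colon\mathcal{H}(G,K)\xrightarrow{\ \sim\ }\bC[X_*(T)]^{W}$ extends to an injective homomorphism on the space $\mathcal{H}^{\mathrm{ac}}(G,K)$ of almost-compact bi-$K$-invariant functions, with values in a suitable completion of $\bC[X_*(T)]^{W}$. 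From the defining property $Z(\phi^{\rho},f_s)=L(s,\pi,\rho)$ and the fact that $\nu$ pairs to $n$ with every weight of $\Sym^{n}V_{\rho}$, one reads off $\mathcal{S}(\phi^{\rho})=\sum_{n\ge 0}\chi_{\Sym^{n}V_{\rho}}$; in the completed group algebra this is precisely the basic function of the affine toric variety $M_T=\Spec k[\sigma^{\vee}]$ of Example $4.1$, up to the standard $\delta_B^{1/2}$ normalisation implicit in $\mathcal{S}$.

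The second input, assembled in Section $6$ and recorded in diagram $(6.8)$, is that $J^{\rho}$ carries $\mathcal{H}(G,K)$ into $\mathcal{H}^{\mathrm{ac}}(G,K)$ and that $\mathcal{S}$ intertwines it with the Fourier transform $J^{\rho}_{T}$ on $T(k)$ induced from tori, so that $\mathcal{S}\circ J^{\rho}=J^{\rho}_{T}\circ\mathcal{S}$ on $\mathcal{H}(G,K)$. Granting this, the proof is completed as follows. One first solves the corresponding problem on the torus: produce a $W$-invariant $\Phi_{T}\in\bC[X_*(T)]^{W}$, regarded as a bi-$T(O_k)$-invariant element of $C_c^{\infty}(T(k))$, such that $\mathcal{S}(\phi^{\rho})-J^{\rho}_{T}(\Phi_{T})$ again lies in $\bC[X_*(T)]^{W}$, i.e.\ is finitely supported. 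Setting $\Phi:=\mathcal{S}^{-1}(\Phi_{T})$ and $\Psi:=\mathcal{S}^{-1}\big(\mathcal{S}(\phi^{\rho})-J^{\rho}_{T}(\Phi_{T})\big)$, both in $\mathcal{H}(G,K)\subset C_c^{\infty}(G(k))$, the commutation of $\mathcal{S}$ with $J^{\rho}$ gives $\mathcal{S}\big(\phi^{\rho}-J^{\rho}(\Phi)-\Psi\big)=0$, whence $\phi^{\rho}=J^{\rho}(\Phi)+\Psi\in J^{\rho}(C_c^{\infty}(G(k)))+C_c^{\infty}(G(k))=\sS^{\rho}(G)$ by injectivity of $\mathcal{S}$.

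The main obstacle is the remaining torus computation. In the rank-one case ($G=GL_1$) this is exactly Remark $5.2$: solving the linear relation $(1-q^{-1})\,\mathrm{char}(O_k)=\widehat{\mathrm{char}(O_k^{*})}+q^{-1}\,\mathrm{char}(P_k^{-1}\setminus O_k)$ expresses $\mathrm{char}(O_k)$ as a scalar multiple of $J^{\rho}_{T}$ of a compactly supported function plus a scalar multiple of $\mathrm{char}(P_k^{-1}\setminus O_k)$, the latter being compactly supported on $k^{*}$. In general one must combine these rank-one identities over the generators of the cone $\sigma^{\vee}$ defining $M_T$; since the basic function and the Fourier transforms have supports spread along a cone rather than a compact set, this combination is not purely formal and requires a telescoping/summation argument extending the $(1-q^{-1})^{-1}$ already visible in the $GL_1$ case, and it is here that the geometry of the monoid $M_T$ genuinely intervenes. (Alternatively, the torus case can be quoted from the treatment of tori and unramified data in Section $6$.) The inputs used upstream — the extension of the Satake transform to almost-compact functions and its commutation with $J^{\rho}$ — are treated here as a black box supplied by Section $6$ and \cite{li2017basic}.
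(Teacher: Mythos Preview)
Your overall architecture matches the paper's: reduce to the torus via the extended Satake transform of \cite{li2017basic}, invoke the commutation of $\mathcal{S}$ with the Fourier transforms recorded in diagram~(6.8), and then lift a torus decomposition back to $G$ by $\mathcal{S}^{-1}$. Where you diverge is in the torus step itself, and this is precisely the point you flag as ``not purely formal.''

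You attempt to decompose the basic function of $M_T$ directly on $T$, by combining rank-one identities along the generators of the cone $\sigma^{\vee}$. As you acknowledge, this is awkward: the cone $\sigma^{\vee}$ need not be simplicial, the basic function on $M_T$ is a genuine vector-partition count rather than a product, and no clean telescoping is available in general. The paper sidesteps this entirely by inserting an auxiliary torus. Let $n=\dim\rho$ and let $\tilde{\rho}_T\colon T_n=\bG_m^{\,n}\to T$ be the map dual to the weight decomposition $\rho|\hT=\bigoplus_i\mu_i$, extending to $\bA^n\to M_T$. On $T_n$ the standard basic function is literally $\mathrm{char}(O_k^{\,n}\cap(k^*)^n)$, a product of $n$ copies of the rank-one case, so Remark~5.2 applied coordinate-wise gives $\phi^{\std}_{T_n}=f_1+J^{\std}_T(f_2)$ with $f_1,f_2\in C_c^\infty(T_n(k))$ and moreover $W_n$-symmetric. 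The push-forward $\rho_*$ along $\tilde{\rho}_T$ sends $\phi^{\std}_{T_n}$ to $\phi^{\rho}_T$ and, by Lemma~6.1 (diagram~(6.4)), intertwines $J^{\std}_T$ with $J^{\rho_T}$; hence $\phi^{\rho}_T=\rho_*(f_1)+J^{\rho_T}(\rho_*(f_2))$ with both pieces $W$-invariant. Now your lifting step via $\mathcal{S}^{-1}$ and diagram~(6.8) finishes the proof exactly as you wrote.

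So the gap in your argument is real but easily closed: rather than wrestling with the combinatorics of $\sigma^{\vee}$, factor through $T_n\to T$ where the basic function is a genuine product and the decomposition is immediate. Section~6 does not hand you the torus decomposition directly, as your parenthetical suggests; what it supplies is precisely this $\rho_*$-compatibility (Lemma~6.1), which is the missing bridge.
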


\begin{proof}
Note that when $L=T$ is a maximal torus, the $\rho$-Harish-Chandra transform becomes (a twist of) the Satake transform, and in this case the above diagram $(5.5)$ can be extended to the class of almost compact (ac) spherical functions as defined by Wen-Wei Li in \cite{li2017basic}, and we note that the $\rho$-basic function is amongst this class (see \cite{10.1007/978-3-319-94833-1_11}). The above computation for $\Phi_0$ can be extended to show that the function $f_T^{\textrm{std}}=\rm{char}(\mathbb{A}^n(O_k) \cap T_n(k))$ is also a sum in $C^\infty_c(T_n(k))+J^{\rho_T}(C_c^\infty(T_n(k))$, where $T_n(k) \hookrightarrow \mathbb{A}^n(k)$ is the standard embedding of a maximal torus $T_n \cong \mathbb{G}_m^n $ of $GL_n$ into affine space.

Let $\mathrm{Sat} := \textrm{std}-\rm{HC}$ be this extended Satake transform. Given a decomposition $\phi_{T_n}^{\textrm{std}} = f_1+J^{\textrm{std}}(f_2)$, with $f_1,f_2 \in C^\infty_c(T_n(k))$, the commutativity of (5.5) implies that the standard basic function on $GL_n(k)$, $\phi^{\rm{std}} = \mathrm{Sat}^{-1}(f_1) + \mathrm{Sat}^{-1}( J^{\rm{std}_T}(f_2)) = \mathrm{Sat}^{-1}(f_1) + J^{\rm{std}}(\mathrm{Sat}^{-1}(f_2)),$ lies in $\mathcal{S}^{\rm{std}}(GL_n(k))$ as defined in $(5.6)$.  

Note that here $\textrm{Sat}$ is an isomorphism of $K$-spherical compactly supported functions on $G(k)$ and the Weyl-invariant compactly supported functions on $T(k)/T(\sO_k)$. The basic function on $T_n(k)$, and the functions in its decomposition as $f_1 + J^{\textrm{std}}(f_2)$ are invariant under permutations of the coordinates, and so the above maps are well-defined in the remarks above.

We can deduce the analogous case for a  general $\rho$ from the standard case above as follows: Let $T$ be a maximal torus in $G$ with representation $\rho$ of the dual group of $G$. One obtains a canonical map $\tilde{\rho_T}:T_n(k) \to T(k)$ that extends to a map $\mathbb{A}^n(k) \to M_T(k)$, the target of this map being the toric variety constructed in section $2$ (see Section $6$). The $\rho$-Schwartz space on $T(k)$ can be defined as the image of

\[C^\infty_c(\mathbb{A}_n(k)) \cap C^{\infty}(T_n(k)) \to \rho_{*}(C^\infty_c(\mathbb{A}_n(k)) \cap C^{\infty}(T_n(k))),\] 
the pushforward by $\tilde{\rho_T}$. Then the torus basic function $\phi^{\rho}_T$ can be expressed as $\rho_{*}(\phi^{\rm{std}}_{T_n}).$ Moreover, this pushforward is compatible with the $\rho_T$-Fourier transform on tori, as in diagram $(6.4)$. That is,

\begin{equation*}
\begin{aligned}
\phi^{\rho}_T=\rho_{*}(\phi^{\rm{std}}_{T_n}) &= \rho_{*}(f_1+J_T^{\textrm{std}}(f_2))\\
 &= \rho_{*}(f_1) + \rho_{*}(J_T^{\rm{std}}(f_2))\\
 &= \rho_{*}(f_1) + J^{\rho_T}(\rho_{*}(f_2)),
\end{aligned}
\end{equation*}
which shows that $\phi^{\rho}_T \in C_c^{\infty}(T(k)) + J^{\rho_T}(C^{\infty}_c(T(k))$. Finally, the commutativity of diagram $(6.8)$ allows us to lift this decomposition to a decomposition of the basic function as

\begin{equation*}
\begin{aligned}
\phi^{\rho}&=\mathrm{Sat}^{-1}(\phi^{\rho}_T)\\          &=\mathrm{Sat}^{-1}(\rho_{*}(f_1)) + \mathrm{Sat}^{-1}(J^{\rho_T})(\rho_{*}(f_2)))\\
&= \mathrm{Sat}^{-1}(\rho_{*}(f_1)) + J^{\rho}(\mathrm{Sat}^{-1}(\rho_{*}(f_2)).
\end{aligned}
\end{equation*}
\end{proof}

\bigskip
\noindent
{\it{Multiplicativity.}}\ As we discussed earlier every theory of $L$-functions  must satisfy multiplicativity, an axiom that is a theorem for all the Artin $L$-functions and is the main tool in computing $\gamma$-factors and $L$-functions. 
To explain, let $P=NL$ be a parabolic subgroup of $G$ with a Levi subgroup $L$,  uniquely fixed such that $L\supset T$, the maximal torus of $G$ fixed in our construction throughout. Let $\sigma$ be an irreducible admissible representation of $L(k)$ and let $\rho$ be a finite dimensional complex representation of $\hG$ and $\rho_{_L}=\rho | \hL$ as before. For each irreducible admissible representation $\sigma$ of $L(k)$, we can define the $\gamma$-factors $\gamma(s,\sigma,\rho_{_L})$ and $\gamma(s, \Ind_{P(k)}^{G(k)}\sigma,\rho)$. Multiplicativity states that:
\begin{equation}   % 5.9
\gamma(s, \Ind_{P(k)}^{G(k)}\sigma,\rho)=\gamma(s,\sigma,\rho_{_L}).
\end{equation}
Here we suppress the dependence of the factors on the non-trivial additive character of $k$.

We note that, since the induced representation $\Ind^{G(k)}_{P(k)}\sigma$ may not be irreducible, the $\gamma$-factor $\gamma(s,\Ind^{G(k)}_{P(k)}\sigma, \rho)$ is defined to be $\gamma(s,\pi,\rho)$, where $\pi$ is any irreducible constituent of $\Ind^{G(k)}_{P(k)}\sigma$. The $\gamma$-factor will not depend on the choice of $\pi$ as the proof below establishes. 

A proof of (5.9) is usually fairly hard for $\gamma$-factors defined by Rankin--Selberg method \cite{jacquet1983rankin,soudry1993rankin}. $\!$On the contrary, (5.9) is a general result within the Langlands--Shahidi \cite{shahidi2010eisenstein} method with a very natural proof.

Our aim here is to give a general proof of (5.9) within the Braverman--Kazhdan/Ngo and Lafforgue programs using (5.4) and Int$(K)$-invariance of $J^\rho$. It follows the arguments given in \cite{godement2006zeta}. We now proceed  to give a proof of (5.9) which we formally state as:

\begin{theorem}    % 5.3
Let $\sigma$ be an irreducible admissible representation of $L(k)$ and let

\smallskip\noindent
 $\Pi=\Ind\,^{G(k)}_{P(k)} \sigma$. Let $\rho$ be an irreducible finite dimensional  complex representation of $\hG$ and let $\rho_{_L}=\rho | \hL$. Assume the validity of (5.4) and thus (5.5) for Schwartz functions, and that $J\strut^{\,\rho}(k^{-1}x k)=J\strut^{\,\rho}(x)$, $k\in K$, a maximal compact subgroup of $G(k)$ satisfying $G(k)=P(k)K$, which follows from the expected Int$(G)$-invariance of the kernel $J\strut^{\,\rho}$. Then
\[\gamma(s,\Pi,\rho)=\gamma(s,\sigma, \rho_{_L}).\]
\end{theorem}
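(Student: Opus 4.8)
The plan is to follow the Godement--Jacquet strategy in the general Braverman--Kazhdan/Ngo setting, using the full functional equation rather than the convolution definition of $\gamma$. First I would recall that for an irreducible admissible $\pi$ of $G(k)$, the functional equation reads
\[
Z\bigl(J^\rho\Phi, \check f, \text{(shifted arg.)}\bigr)=\gamma(s,\pi,\rho)\,Z\bigl(\Phi, f, \text{(arg.)}\bigr)
\]
for all $\Phi\in\sS^\rho(G)$ and all matrix coefficients $f$ of $\pi$, where $Z(\Phi,f,s)=\int_{G(k)}\Phi(g)f(g)|\nu(g)|^{s+\langle\eta_G,\lambda\rangle}\,dg$; this is equivalent to (5.3) by Schur's lemma and irreducibility. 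The key is to choose, for a given irreducible constituent $\pi$ of $\Pi=\Ind_{P(k)}^{G(k)}\sigma$, a matrix coefficient $f$ of $\pi$ that is an integral over $K$ of a matrix coefficient of $\sigma$ pulled up to $G(k)$ via the Iwasawa decomposition $G(k)=P(k)K$. Concretely, for $v,\tv$ in the spaces of $\Pi$ and $\tPi$ realized as functions on $K$, the matrix coefficient $f(g)=\langle\Pi(g)v,\tv\rangle$ unfolds as an integral over $K$ against $\langle\sigma(\cdot)\,,\,\cdot\rangle$.

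Next I would substitute this $f$ into $Z(\Phi,f,s)$, use the Iwasawa decomposition $G(k)=N(k)L(k)K$ to write $dg$ accordingly, and reorganize the integral. The inner $N(k)$-integration of $\Phi$ against the modulus character produces exactly the Harish--Chandra transform $\Phi_P$, and the $|\nu|$-twist together with the shift discrepancy $\nu_{G/L,\rho}$ turns it into the $\rho$-Harish--Chandra transform $\Phi^\rho_P\in\sS^{\rho_L}(L)$ (here Proposition 5.2 is what guarantees $\Phi^\rho_P$ lands in $\sS^{\rho_L}(L)$, so the right-hand $Z$-integral on $L(k)$ converges and makes sense). The $\Int(K)$-invariance hypothesis $J^\rho(k^{-1}xk)=J^\rho(x)$ is used precisely to commute the $K$-average with the passage to $J^\rho$, so that the whole computation is consistent whether one applies $J^\rho$ before or after descending. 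The upshot is an identity
\[
Z_G(\Phi,f,s)=Z_L\bigl(\Phi^\rho_P,\,f_\sigma,\,s\bigr)
\]
relating the $G$-zeta integral of $(\Phi,f)$ to the $L$-zeta integral of $(\Phi^\rho_P,f_\sigma)$, where $f_\sigma$ is the corresponding matrix coefficient of $\sigma$, and likewise for the $\check{\ }$ / Fourier-transformed side using $(J^\rho\Phi)^\rho_P=J^{\rho_L}\Phi^\rho_P$ from (5.4).

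Finally I would combine: apply the $G$-functional equation on the left, the descent identity on both sides, and the $L$-functional equation for $\sigma$ and $\rho_L$ on the descended integrals. This yields
\[
\gamma(s,\pi,\rho)\,Z_L(\Phi^\rho_P,f_\sigma,s)=Z_L\bigl(J^{\rho_L}\Phi^\rho_P,\check f_\sigma,\text{(shift)}\bigr)=\gamma(s,\sigma,\rho_L)\,Z_L(\Phi^\rho_P,f_\sigma,s).
\]
Since the $\rho$-Harish--Chandra transform $\Phi^\rho_P$ ranges over a large enough subset of $\sS^{\rho_L}(L)$ (and $f_\sigma$ over enough matrix coefficients of $\sigma$) that $Z_L(\Phi^\rho_P,f_\sigma,s)$ is not identically zero, we may cancel it and conclude $\gamma(s,\pi,\rho)=\gamma(s,\sigma,\rho_L)$; in particular the left side is independent of the chosen constituent $\pi$. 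The main obstacle I anticipate is the bookkeeping of the two shifts: verifying that the twist $\nu^{1/2}_{G/L,\rho}$ built into the $\rho$-HC transform (5.2)--(5.3) exactly reconciles the shift $\langle\eta_G,\lambda\rangle$ on $G$ with the shift $\langle\eta_L,\lambda_1+\cdots+\lambda_r\rangle$ on $L$, together with the half-power of $\delta_P$, so that the descended $|\nu_L|$-exponent on $L(k)$ matches the argument required by the $L$-functional equation — this is where the construction of $\nu_{G/L,\rho}$ in §5.1 does its work and is the step that must be checked with care rather than waved through. A secondary technical point is ensuring convergence of all the integrals in a common right half-plane and then arguing by meromorphic continuation, but this is routine once the $\rho$-Schwartz functions are known (Lemma 5.5) to be uniformly bi-$K$-finite so that the $K$-integral is in fact a finite sum.
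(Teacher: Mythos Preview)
Your strategy is the paper's strategy --- Godement--Jacquet descent via Iwasawa and the $\rho$-Harish--Chandra transform --- but the descent identity you write, $Z_G(\Phi,f,s)=Z_L(\Phi^\rho_P,f_\sigma,s)$, is not what actually comes out, and the fix is precisely the step you relegated to a ``secondary technical point''.

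After unfolding $f(g)=\int_K\langle v(kg),\tv(k)\rangle_0\,dk$ and using $g=nlh$, what one obtains (the paper's (5.15)) is
\[
Z(\Phi,f)=\int_{K\times K}\int_{L(k)} (h\cdot\Phi\cdot k^{-1})^\rho_P(l)\,\langle\sigma(l)v(h),\tv(k)\rangle_0\,\delta^{1/2}_{L,\rho_L}(l)\,dl\,dh\,dk,
\]
in which both the descended Schwartz function and the $\sigma$-matrix coefficient depend on $(h,k)\in K\times K$; there is no single pair $(\Phi^\rho_P,f_\sigma)$. The paper separates variables by the finite expansions $\langle\sigma(l)v(h),\tv(k)\rangle_0=\sum_i f_i^L(l)\lambda_i(h,k)$ and $h\cdot\Phi\cdot k^{-1}=\sum_j\Phi_j\,\mu_j(h,k)$, yielding $Z(\Phi,f)=\sum_{i,j}c_{ij}Z_L(\Phi^\rho_{j,P},f_i^L)$. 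The expansion (5.16) for $\Phi\in C^\infty_c(G(k))$ is automatic; that it also holds for $\Phi\in J^\rho(C^\infty_c(G(k)))$ --- hence for all of $\sS^\rho(G)$ --- is exactly Lemma 5.5, which uses the $\Int(K)$-invariance of $J^\rho$. So the bi-$K$-finiteness is not a convergence afterthought but the mechanism that makes the descent a finite sum of honest $L$-zeta integrals.

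The second place Lemma 5.5 is essential, and which your sketch does not see, is on the $\check f$ side. Because $\check f(g)=f(g^{-1})$, one writes $g=h^{-1}ln$ there, and the integrand on $K\times K$ carries $(k\cdot\hat\Phi\cdot h^{-1})^\rho_P$ with the roles of $h$ and $k$ swapped relative to the first computation. Lemma 5.5 gives $(h\cdot\Phi\cdot k^{-1})^\wedge=k\cdot\hat\Phi\cdot h^{-1}$, so that $(k\cdot\hat\Phi\cdot h^{-1})^\rho_P=\bigl((h\cdot\Phi\cdot k^{-1})^\wedge\bigr)^\rho_P$, and only then does the commutativity hypothesis (5.4) convert this to $J^{\rho_L}\bigl((h\cdot\Phi\cdot k^{-1})^\rho_P\bigr)$. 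This is what makes the \emph{same} constants $c_{ij}$ reappear on the Fourier side, so that after the $L$-functional equation one can cancel $\sum_{i,j}c_{ij}Z_L(\Phi^\rho_{j,P},f_i^L)$ against itself. Your phrase ``commute the $K$-average with the passage to $J^\rho$'' gestures at this, but if you try to line the two sides up without the swap identity you will find the $(h,k)$ labels mismatched and no common factor to cancel. The shift bookkeeping you flag as the main obstacle is, by contrast, straightforward once $\nu_{G/L,\rho}$ is in place.
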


\begin{proof}
Let $\tPi=\Ind^{G(k)}_{P(k)}\ts$ be the contragredient  of $\Pi$. Choose $v\in\Pi$ and $\tv\in\tPi$. Then a 

\smallskip\noindent
matrix coefficient for $\Pi$ can be written as
\begin{eqnarray}   %5.10
f(g)&=&\langle\Pi(g)v,\tv\rangle  \nonumber\\
&=&\int\limits_K\langle v(kg),\tv(k)\rangle_0\  dk,
\end{eqnarray}
where $v(\cdot)$ and $\tv(\cdot)$ are values of the functions in $\Pi$ and $\tPi$, respectively, and $\langle\sigma(l)w,\tw\rangle_0$ is a matrix coefficient for $\sigma$, $w\in\sigma$ and $\tw\in\ts$. Let $\Phi\in \sS^{\rho}(G)$ be a $\rho$-Schwartz function in $C^\infty(G(k))$. We absorb the complex number $s$ in $f$ by replacing $\pi$ by $\pi\otimes |\nu|^s$ and then ignoring it throughout the proof.

Now we have the zeta function
\begin{equation}   % 5.11
Z(\Phi,f)=\int \Phi(g)f(g)\delta^{1/2}_{G,\rho}(g) dg.
\end{equation}
As explained earlier, the shift $\delta^{1/2}_{G,\rho}$ allows us to get the precise $L$-function at $s$, rather than a shift of $s$, when $\Phi$ is the basic function of $\sigma$ for a spherical representation $\sigma$. 
Using (5.10), (5.11) equals 
\begin{eqnarray}   %5.12
Z(\Phi,f)=&\int\limits_{K\times G(k)}\Phi(g) \langle v(kg),\tv(k)\rangle_0\  \delta^{1/2}_{G,\rho}(g) dk\,dg  \nonumber\\
\nonumber \\ 
=&\int\limits_{K\times G(k)} \Phi(k^{-1}g) \langle v(g),\tv(k)\rangle_0\  \delta^{1/2}_{G,\rho}(g) dk\,  dg.
\end{eqnarray}
Write $g=nlh$, $n\in N(k)$, $l\in L(k)$, $h\in K$. Then
\[dg=\delta\strut^{-1}_P(l) dn\, dl\, dh.\]
With notation as in \cite{godement2006zeta}, define:
\begin{equation}   %5.13
(h\cdot\Phi\cdot k^{-1})(x): = \Phi(k^{-1}x h).
\end{equation}
Thus 
\begin{equation}   %5.14
Z(\Phi,f)=\int\limits_{N(k)\times L(k)\times K\times K} (h\cdot \Phi\cdot k^{-1})(nl)\ \delta^{1/2}_P(l) \langle \sigma(l)v(h),\tv(k)\rangle_0 \  \delta^{1/2}_{G,\rho}(l)\delta\strut^{-1}_P(l) dn\, dl\, dh\, dk. 
\end{equation}

Recall the HC--transform $(h\cdot \Phi\cdot k^{-1})_P$:
\[(h\cdot \Phi\cdot k^{-1})_P(l)=\delta^{-1/2}_P(l) \int\limits_{N(k)}(h\cdot \Phi\cdot k^{-1})(nl) dn.\]
Then (5.14) equals
\begin{eqnarray}   %5.15
&\int\limits_{L(k)\times K\times K} \nu_{G/L,\rho}^{1/2} (l) (h\cdot \Phi\cdot k^{-1})_P(l) \langle \sigma(l)v(h),\tv(k)\rangle_0 \   
\delta^{1/2}_{L,\rho_{_L}}(l) dl\, dh\, dk \nonumber \\
&=\int\limits_{L(k)\times K\times K}(h\cdot\Phi\cdot k^{-1})^\rho_P(l)\langle\sigma(l)v(h), \tv(k)\rangle_0\  \delta_{L,\rho_{_L}}^{1/2}(l) dl\,dh\,dk. 
\end{eqnarray}
Since $K$ is compact and $v$ and $\tv$ are smooth functions, there exist matrix coefficients $f^L_i(l)$ of $\sigma$ and continuous functions $\lambda_i$ on $K \times K$ such that 
\[\langle \sigma(l) v(h), \tv(k)\rangle_0 =\sum\limits_i f^L_i(l)\lambda_i(h,k).\]
Similarly, there are Schwartz functions $\Phi_j$ in $\sS^{\rho}(G)$ and continuous symmetric functions $\mu_j$ on $K \times K$ such that
\begin{equation}   %5.16
h\cdot\Phi\cdot k^{-1}=\sum\limits_j \Phi_j \mu_j(h,k).
\end{equation}  
This is clearly true if $\Phi\in C^\infty_c(G(k))$, since it will then be uniformly smooth. Otherwise, using (5.16) we have
\[
(h\cdot\Phi\cdot k^{-1})^\wedge=\sum\limits_j \hPhi_j \mu_j(h,k),
\]
where $\hPhi :=J\strut^{\,\rho} \Phi$ for simplicity. But Lemma 5.6, proved later, implies
\begin{eqnarray*}
k\cdot\hPhi\cdot h^{-1}&\!
=\!&\sum\limits_j\hPhi_j \mu_j(k,h)
\end{eqnarray*}
for all $h$ and $k$ in $K$ and thus (5.16) holds for all $\Phi\in S^{\,^\rho}(G)$.
Consequently
\[(h\cdot\Phi\cdot k^{-1})^\rho_P(l)=\sum\limits_j \Phi_{j,P}^\rho(l) \mu_j(h,k)\]
with $\Phi_{j,P}^\rho\in \sS^{\rho_{_L}}(L)$ by Proposition 5.1. Let

\[c_{ij} = \int_{K \times K} \lambda_i \mu_j(h,k) dhdk. \]
Then we have
\begin{equation}   % 5.17
Z(\Phi,f)=\sum\limits_{i,j} c_{ij}Z(\Phi^\rho_{j,P}, f^L_i) 
\end{equation}

For simplicity of notation, let for each $\Phi\in \sS^{\rho}(G)$, $\hPhi := J^{^\rho}\Phi$. We now calculate 
\[Z(\hPhi,\check f)=\int \hPhi(g)\check f(g)\delta^{1/2}_{G,\rho}(g)\  |\nu(g)|\  dg.\]
One needs to be careful since the involution $g\mapsto g^{-1}$ will now play an important role.
We should also point out that $|\nu(g)|$ needs to be inserted to take into account the appearance of $1-s$ in the left hand side of the functional equation as it is the case in equation (1.1) for $GL_n$. We note that $|\nu(g)|=|\nu(l)|=|\nu_L(l)|$ and that $-s$ will appear as $\tpi\otimes |\nu(\null)|^{-s}$, and thus included in $\tpi$ as $s$ did in $\pi$ as $\pi\otimes |\nu(\null)|^s$. Note that in the case

\smallskip\noindent
 of $GL_n$ and $\rho=\std$, $\nu=\det$, and $\delta_{G,\std}^{1/2}=|\det|^{\frac{n-1}{2}}$ as reflected in (1.1).

\medskip

 We have
\begin{eqnarray}   % 5.18
Z(\hPhi,\check f)&=&\int\limits_{G(k)} \hPhi(g)f(g^{-1})\delta^{1/2}_{G,\rho}(g)\  |\nu(g)|\   dg  \nonumber \\ 
\\  \nonumber
&=& \int\limits_{G(k)\times K} \hPhi(g)\langle v(k g^{-1}), \tv(k)\rangle_0\ \delta^{1/2}_{G,\rho}(g)\  |\nu(g)|\   dg\ dk.
\end{eqnarray}
Changing $g$ to $gk$, (5.18) equals
\begin{equation}   %5.19
 =\int\limits_{G(k)\times K} \hPhi(gk)\langle v(g^{-1}), \tv(k)\rangle_0\  \delta^{1/2}_{G,\rho}(g)\  |\nu(g)|\   dg\ dk.
\end{equation}
Write $g=h^{-1}ln$, $n\in N(k)$, $l\in L(k)$, $h\in K$. Then
\begin{equation}   %5.20
dg=d(g^{-1})=\delta_P(l)\  dh\  dl\  dn
\end{equation}
and (5.19) equals
\begin{equation}  %5.21
=\int\limits_{K\times L(k)\times N(k)\times K} \hPhi(h^{-1}lnk)\langle v(l^{-1} h), \tv(k)\rangle_0\  \delta^{1/2}_{G,\rho}(l) \delta_P(l)\  |\nu(l)|\   dh \ dl\ dn\ dk.
\end{equation}
\begin{equation*}  %5.21 B
\begin{aligned}
&= \int\limits_{K\times L(k)\times K} (\delta^{1/2}_P(l)\int\limits_{N(k)} (k\cdot \hPhi\cdot h^{-1}) (ln) dn) \langle v(l^{-1} h), \tv(k)\rangle_0\  \delta^{1/2}_{G,\rho} (l) \delta^{1/2}_P(l)\  |\nu_L(l)|\   dh\ dl\ dk   
\\  
 &= \int\limits_{K\times L(k)\times K} (\delta^{-1/2}_P(l)\int\limits_{N(k)} (k\cdot \hPhi\cdot h^{-1}) (nl)dn) \langle v(l^{-1} h), \tv(k)\rangle_0\  \nu^{1/2}_{G/L,\rho} (l)\  \delta^{1/2}_P(l)\ \delta^{1/2}_{L,\rho_L}(l)\  |\nu_L(l)|\  dh\ dl\ dk 
\\  
 &= \int\limits_{K\times L(k)\times K} \nu^{1/2}_{G/L,\rho}(l)(k\cdot\hPhi\cdot h^{-1})_P(l)\langle v(l^{-1}h), \tv(k)\rangle_0\  \delta^{1/2}_{L,\rho_L}(l)\  |\nu_L(l)|\ \delta^{1/2}_P(l)\  dh\ dl\ dk, 
\end{aligned}
\end{equation*} 
which finally equals 
\begin{equation}
 \int\limits_{K\times L(k)\times K} (k\cdot \hPhi\cdot h^{-1})^{\rho}_P(l)\ \delta^{-1/2}_P(l)\langle\sigma(l^{-1})v(h), \tv(k)\rangle_0\  \delta^{1/2}_{L,\rho_L} (l)\  |\nu_L(l)|\ \delta^{1/2}_P(l)\  dh\ dl\ dk. 
\end{equation}

Again, for simplicity for each $\Phi\in \sS^{\rho}(G)$, let $\hPhi$ denote its $\rho$-Fourier transform
\begin{equation}   % 5.23
\hPhi(x)=(J^\rho * \check\Phi) (x).
\end{equation}

To proceed, we need:
\begin{lemma}        % 5.4 Lemma
Let $k$ and $h$ be in $K$. Then 
\[({k\cdot \Phi\cdot h^{-1}})^\wedge=h\cdot\hPhi\cdot k^{-1}.
\]
\end{lemma}

\begin{proof}
We have
\begin{eqnarray}   % 5.24
({k\cdot \Phi\cdot h^{-1}})^\wedge(x)&=&\int\limits_{M^\rho(k)}(k\cdot\Phi\cdot h^{-1}) (y) J^{\rho}(xy) dy \nonumber \\
\nonumber \\
&=&\int \Phi (h^{-1} y k) J^{\rho}(xy) dy \\
&=&\int \Phi (y) J^{\rho}(x h y k^{-1}) dy, \nonumber
\end{eqnarray}
since $h$ and $k$ are in $K$ whose modulus character is 1. Now using Int$(G)$-invariance of

\smallskip
\noindent
 $J^{^\rho}$, (5.24) equals
\begin{eqnarray*}
&=&\int \Phi(y)J^\rho(k^{-1}x hy) dy\\
&=&\hPhi(k^{-1}x h)\\
&=&(h\cdot \hPhi\cdot k^{-1})(x),
\end{eqnarray*}
completing the proof.
\end{proof}

\begin{remark}
In terms of $J^\rho$ we have proved:
\begin{equation}   % 5.25
J^{\rho} * (k\cdot\Phi\cdot h^{-1})^\vee= h\cdot(J^{\rho} * \check\Phi)\cdot k^{-1}
\end{equation}
\end{remark}

We now apply Lemma 5.5 to equation (5.22) to get:
\begin{equation}    % 5.26
Z(\hPhi,\check f)=\int\limits_{L(k)\times K\times K} {(({h\cdot \Phi\cdot k^{-1}})^\wedge})\strut^{\rho}_P(l)\langle \sigma(l^{-1})v(h), \tv(k)\rangle_0 \ \delta^{1/2}_{L,\rho_{_L}}(l) \ |\nu_L(l)|\  dl\, dh\, dk.
\end{equation}
But
\begin{eqnarray*}
({h\cdot \Phi\cdot k^{-1}})^\wedge&=&(\sum\limits_j\Phi_j )^\wedge \cdot \mu_j(h,k)\\
\\
&=&\sum\limits_j\hPhi_j \cdot \mu_j(h,k)
\end{eqnarray*}
and
\begin{eqnarray*}
\langle \sigma(l^{-1})v(h), \tv(k)\rangle_0&=&\sum\limits_i f^L_i(l^{-1}) \cdot \lambda_i(h,k)\\
\\
&=&\sum\limits_i \check f^L_i(l) \cdot \lambda_i(h,k)
\end{eqnarray*}
and therefore (5.26) equals
\begin{equation}            % 5.27
\sum\limits_{i,j}c_{ij}\int\limits_{L(k)}(J^{\rho}\Phi_j)^{\rho}_P(l) \check f^L_i(l) \delta^{1/2}_{L,\rho_{_L}}(l) \ |\nu_L(l)|\  dl.
\end{equation}

We can now apply the commutativity of $\rho$--Harish--Chandra transform  and Fourier transforms $J^{\rho}$ and $J^{\rho_{_L}}$, i.e., equation (5.4) to conclude that (5.27) equals 
\begin{equation}    % 6.28
\sum\limits_{i,j} c_{ij} \int\limits_{L(k)}J^{{^\rho_{_L}}}(\Phi^\rho_{j,P})(l)\check f_i^L(l)\delta^{1/2}_{L,\rho_{_L}}(l) \ |\nu_L(l)|\  dl.
\end{equation}
But
\[Z(\hPhi,\check f)=\gamma(\Pi,\rho) Z(\Phi,f)\]
by the functional equation for $G$. On the other hand the functional equation for $L$ gives (5.28) as 
\[\gamma(\sigma,\rho_{_L})\sum\limits_{i,j} c_{ij} \int\limits_{L(k)} \Phi_{j,P}^\rho (l) f^L_i (l) \delta^{1/2}_{L,\rho_{_L}}(l) dl\]
which equals 
\begin{eqnarray*} 
&=&\!\!\!\gamma(\sigma, \rho_{_L})\sum\limits_{i,j} c_{ij} Z(\Phi_{j,P}^\rho, f^{L}_i)\\
\\
&=&\!\!\!\gamma(\sigma,\rho_{_L}) Z(\Phi,f)
\end{eqnarray*}
by (5.17). The equality
\[\gamma(\Ind^{G(k)}_{P(k)}\sigma,\rho)=\gamma(\sigma,\rho_{_L})\]
is now immediate.
\end{proof}

\bigskip\noindent
\subsection{The case of $GL_n$.} We now determine $\nu_{_{G/L}}$ in the case $G=GL_n$ and $\rho=\std$, i.e., that of Godement--Jacquet \cite{godement2006zeta} and show that it agrees with calculations in Lemma 3.4.0 of loc. cit., after a suitable normalization. We thus assume $P=NL$ is the standard maximal parabolic subgroup of $GL_n$, containing the subgroup of upper triangular elements $B$, $N\subset B$, with $L=GL_{n'} \times GL_{n^{''}}, n=n'+n^{''}$. Recall that we need to determine $\nu_{_{G/L},\std}=\delta_{G,\std}\, /\, \delta_{L,\std}$. But for $g=\text{diag}(g', g^{''})\in L$

\begin{eqnarray*}
|\det{g'}|^{-n^{''}}\cdot\, \delta_G^{1/2}(g',g^{''})&=& |\det{g'}|^{-n^{''}}\, |\det g|^{\frac{1}{2}(n'+n^{''}-1)}\\
&=&|\det g'|^{\frac{1}{2}(n'-1)}\, |\det g^{''}|^{\frac{1}{2}(n^{''}-1)}\cdot |\det g'|^{-n^{''}/2}\  |\det g^{''}|^{n'/2}\\
&=& \delta_L^{1/2}(g',g^{''})\cdot\, |\det g'|^{-n^{''}/2}\  |\det g^{''}|^{n'/2}.
\end{eqnarray*}
Thus
\begin{equation}    % 5.29
\nu^{1/2}_{_{G/L}} (g', g^{''})=|\det g'|^{n^{''}\!/2}\,\, |\det g^{''}|^{n'/2}.
\end{equation}
Moreover
\begin{equation}    % 5.30
\delta_P(g',g^{''})=|\det g'|^{n^{''}}\cdot\, |\det g^{''}|^{-n'}
\end{equation}       
and thus
\begin{equation}    % 5.31
\nu^{1/2}_{_{G/L}}\, \delta^{-1/2}_P(g', g^{''})=|\det g^{''}|^{n'}.
\end{equation}

We now verify that Lemma 3.4.0 of \cite{godement2006zeta} is equivalent to our commutative diagram (5.5).

\bigskip
Let $J=J^{\std}$ be the standard Fourier transform for $C^\infty_c(M_n(k))$ and $J_L$ its restriction to $C^\infty_c(M_{n'}(k)\times M_{n^{''}}(k))$.  With notation as in pages 37--38 of \cite{godement2006zeta},
\begin{equation}    %5.32
\varphi_{{\Phi}}(x,y)=\int_{k^{n'n^{''}}} \Phi (\left(
\begin{array}{cc}
x & u\\
0 & y
\end{array}\right))\  du,
\end{equation}
where $u\in M_{n'\times n^{''}}(k)$ and $\Phi\in C^\infty_c(M_n(k))$, is the analogue of our HC--transform. In fact, (5.32) can be written as
\begin{eqnarray}   %5.33
\varphi_{\Phi}(x,y)&=&\int \Phi (\left(
\begin{array}{cc}
I & uy^{-1}\\
0 & I
\end{array}\right) 
\left(
\begin{array}{cc}
x & 0\\
0 & y
\end{array}\right))\  du \nonumber \\
&=& |\det y|^{n'} \int\limits_{N(k)} \Phi(n l) dn,
\end{eqnarray} 

\bigskip
\[=\nu^{1/2}_{G/L}(l)\ \delta^{-1/2}_P(l)\int\limits_{N(k)}\Phi(nl)\ dn\]
by (5.31), where $N=M_{n'\times n^{''}}$ and $l=\text{diag}(x,y)$.

In the notation of \cite{godement2006zeta}, Lemma 3.4.0 of \cite{godement2006zeta} states that
\begin{equation}   %5.34
\varphi_{\hPhi}(x,y)=\widehat{\varphi_{\Phi}}(x,y),
\end{equation}
for $\Phi\in C^\infty_c(M_n(F))$ with $\hPhi$ its standard Fourier transform.

Then by (5.33) the right hand side of (5.34) equals
\begin{eqnarray}   % 5.35
\widehat{\varphi_{\Phi}}(x,y) &=& J_L(\int\Phi (\left( 
\begin{array}{cc}
x & u\\
0 & y
\end{array}\right))\  du) \nonumber \\
&=& J_L(|\det y|^{n'}\int\limits_{N(k)}\Phi(nl)\  dn)  \\
&=& J_L(\nu^{1/2}_{G/L}(l)\Phi_P(l))  \nonumber \\
\nonumber \\
&=& J_L(\Phi^{\std}_P(l)) \nonumber
\end{eqnarray}
by (5.31), where
\[\Phi_P(l)=\delta^{-1/2}_P(l)\int\limits_{N(k)}\Phi(nl)\ dn\]
as defined by (5.1).

Similarly from the left hand side of (5.34), using a change of variables as in (5.33), we have 
\begin{eqnarray}   % 5.36
\varphi_{\hPhi}(x,y) &=& \int \hPhi (\left( 
\begin{array}{cc}
x & u\\
0 & y
\end{array}\right))\  du \nonumber \\
&=& |\det y|^{n'}\int\limits_{N(k)}\hPhi(nl)\  dn  \\
&=& \nu^{1/2}_{G/L}(l)\ (\hPhi)_P(l)  \nonumber \\
\nonumber \\
&=& (J\Phi)^{\std}_P (l). \nonumber
\end{eqnarray}

Thus (5.34) is equivalent to (5.4) for $GL_n$ and $\rho=\std$.

\subsection{Inductive definition of $J^{\rho}$}

In the introduction we mentioned that multiplicativity plus a definition of Fourier transform that acts through the correct scalar factors equal to the gamma factors on supercuspidal representations/characters, is enough to characterize the full Fourier transform. Indeed, if we assume that $J^{\rho}$ is a good distribution in the sense of Braverman-Kazhan \cite{braverman2010gamma}, then we can identify $J^{\rho}$ with a rational, scalar valued function $\pi \mapsto \gamma(\rho,\pi)$, where $\gamma(\rho, \pi)$ is defined by $J^{\rho} \star \pi = \gamma(\rho, \pi) \pi$.

Our results on mulitplicativity allow in principle for us to construct in an inductive fashion a distribtion $J^{\rho}$ on $G$ by formally inducing from $J^{\rho_L}$ for each conjugacy class of Levi subgroup $L \subset G$. In fact, our setup and definitions, culminating in Theorem $5.3$, are normalized so as to make induction of representations adjoint to our $\rho$-Harish-Chandra transform, that is, we have an equality

\[
\langle J^{\rho}, \Ind_L(\theta) \rangle = \langle J^{\rho_L}, \theta \rangle.
\]

Here $\theta$ is a supercuspidal character of a representation on $L$. The ajdunction allows us to identify the $J^{\rho}$ and $J^{\rho_L}$ actions on the Bernstein components of $\Ind_L(\sigma) = \pi$ on $G(k)$ and the Bernstein component of $\sigma$ on $L(k)$, respectively. In $5.3$, we started with an assumption of knowledge of $J^{\rho}$ and $J^{\rho_L}$ and we showed that this is equivalent to an equality of gamma factors. However, the gamma factors determine the distribution uniquely, and so one can in principle characterize completely a distribution $J^{\rho}$ by specifying its action on supercuspidal representations on $G(k)$, and postulating multiplicativity as an axiom. More concretely, if we inductively know $J^{\rho_L}$ for conjugacy classes of parabolic subgroups $L$, we may formally induce to provide a definition of $J^{\rho}$ with a correct action, at least on functions whose spectral decomposition consists solely of induced data from $L$:
\[ \langle \Ind_L(J^{\rho_L}), f \rangle = \langle J^{\rho}, HC(f) \rangle. \]
The distribution $\Ind_L(J^{\rho_L})$ can be a priori defined by the above in order to meet this adjunction, and in fact $J^{\rho_L}$ will then be represented by the conjugation-invariant function 
\[\Ind_L(J^{\rho_L}) : x \mapsto |D_G(x)|^{\frac{-1}{2}} \sum_y |D_L(y)|^\frac{1}{2} J^{\rho_L}(y)\]
where the $y$ are chosen representatives of $L(k)$-conjugacy classes of elements that are $G(k)$-conjugate to $x$, and $D_G$  and $D_L$ are the respective discriminant functions on $G$ and $L$. (Here we are identifying $J^{\rho_L}$ with the invariant function representing it).

That $\Ind_L(J^{\rho_L})$ satisfies the first adjunction, and therefore multiplicativity, follows from the formula for the trace,  and the expression of the distribution character $\Theta_{\pi} = \Ind_L(\Theta_{\sigma})$ in terms of $\Theta_{\sigma}$, adapted to the $\rho$-setting.

\begin{section}{Example: The case of Tori and unramified data}

We now consider the case of tori, which for present purposes we assume are split. Let $T$ be a split torus over $k$. When $T$ is a maximal split torus in a reductive group $G$, the upcoming discussion gives the first term of the inductive construction defining the Fourier transform for $L=T$, with minimal parabolic $P_0=P=LN=TN$ which is a Borel subgroup. Let $\rho = \rho_T$ be a finite dimensional represenation of $\hT$. Our notation is justified if we assume $\rho_T = \rho|\hT$, where $\rho$ is a representation of $\hG$. Let $n=\text{dim }\rho_T$. Then

\[
\rho_T: \hT \to GL_n(\bC).
\]
Write

\begin{equation} %5.37
\rho_T = \mu_1 \oplus \cdots \mu_n,
\end{equation}
where the $\mu_i, 1 \leq i \leq n$, are the weights of $\rho_T$. We note that they are not necessarily distinct. If we realize these weights of $\hT$ as co-characters of $T$, we get a map $\tilde{\rho_T}: \bG_m^n \to T$ (defined over $k$, as $T$ is split), which being dual to $\rho_T$, is given by (c.f. \cite{ngo2020hankel})

\[
\tilde{\rho_T}(x_1,\ldots,x_n) = \mu_1(x_1)\cdots \mu_n(x_n).
\]
We can extend this to a monoid homomorphism

\[\tilde{\rho_T}:\bA^n \to M^{\rho_T}, \]
where $M^{\rho_T}$ is the corresponding toric variety. As in \cite{ngo2020hankel}, define the trace function $h: \bA^n \to \bA$ by 

\[ h((x_i)) = \sum_i x_i \]
and set 

\[h_{\psi}:k^n \to \bC^{*} \]
by $x \mapsto \psi(h(x))$, where $\psi$ is our fixed non-trivial character of $k$. 

\bigskip
Denote by $J^{\text{std}}$ the kernel

\[ J^{\text{std}}(g) = \psi(\text{tr}(g))|\text{det}g|^ndg   \]
for $g \in GL_n(k)$ as defined in Section 1, i.e., the standard Fourier transform on $M_n(k)$. We use again $J^{\text{std}}$ for its restrition to $\bA^n$, the monoid for $T_n=\bG_m^n$.

\bigskip
In \cite{ngo2020hankel} Ngo defines the kernel $J^{\rho_T}$ for the Fourier transform on $T$ by 

\begin{equation} %5.42
J^{\rho_T}(t) = \int_{\widetilde{\rho_T}^{-1}(t)} h_{\psi}(x)dx,
\end{equation}
which equals to 

\begin{equation} %5.43
J^{\rho_T}(t) = \int_{x \in U(k)} h_{\psi}(xt)dx
\end{equation}
where $U$ is the kernel of $\tilde{\rho_T}$. In Proposition 6 of \cite{ngo2020hankel}, Ngo regularizes this integration into a principal value integral.

The space of Schwartz functions on $k^n$ are compactly supported functions in $k^n$ that are restrictions of standard Schwartz functions on $M_n(k)$ to $k^n$. Their further restriction to $T_n(k)$ is $\sS^{\text{std}}(T_n)$ in our notation.

\bigskip
Let $\rho_*$ be the push-forward of $\tilde{\rho_T}$. We will verify that the diagram 

\begin{equation}   %5.44
\begin{CD}
\sS^{\text{std}}(T_n) @>\rho_*>> \sS^{\rho_T}(T)\\
@VJ^{\text{std}}VV                 @VVJ^{\rho_T}V \\
\sS^{\text{std}}(T_n)  @>{\rho_*}>> \sS^{\rho_T}(T). 
\end{CD}
\end{equation}
commutes, where $\sS^{\rho_T}(T)$ is the image of $\sS^{\text{std}}(T_n)$ under $\rho_*$.

\bigskip
Let $\phi \in \sS^{\text{std}}(T_n)$ and define

\begin{equation} %5.44
\tilde{\phi}(\tilde{t}) = \int_{U(k)} \phi(u\tilde{t})du
\end{equation}
where $\tilde{t} \in T(k)$. The commutativity of (6.4) is equivalent to 
\begin{lemma}
For $\phi \in \sS^{\text{std}}(T_n)$, define $\tilde{\phi}$ by (6.5). Then 

\[
\rho_{*}(J^{\text{std}} \star \phi^{\vee}) = J^{\rho_T} \star \tilde{\phi}^{\vee}
\]
\end{lemma}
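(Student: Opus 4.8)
The plan is to prove Lemma 6.1 by writing out both sides as iterated integrals and reorganising them by Fubini together with a short chain of elementary substitutions. Two preliminary identifications make the target transparent. First, by the Tate‑case discussion in Section~5 (cf.\ $(5.8)$ and $(5.23)$), when $\phi$ is regarded as a Schwartz function on $k^{n}$ the function $J^{\mathrm{std}}\star\phi^{\vee}$ is simply the ordinary Fourier transform $\widehat\phi(y)=\int_{k^{n}}\phi(x)\,\psi\!\big(\textstyle\sum_{i}x_{i}y_{i}\big)\,dx$. Second, the function $\widetilde\phi$ of $(6.5)$ is exactly the pushforward $\rho_{*}\phi$ along $\widetilde{\rho_T}$. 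Hence the assertion is
\[ \rho_{*}(\widehat\phi)=J^{\rho_{T}}\star(\rho_{*}\phi)^{\vee}, \]
i.e.\ it is precisely the commutativity of diagram $(6.4)$. Before starting I would fix compatible Haar measures: the self‑dual additive measure on $k^{n}$, whose restriction to the dense open $T_{n}(k)=(k^{*})^{n}$ is used again; a Haar measure on $U(k)=(\ker\widetilde{\rho_T})(k)$; and, on the open finite‑index subgroup $T^{\circ}=\widetilde{\rho_T}(T_{n}(k))\subseteq T(k)$ off which every relevant function vanishes, the measure normalised so that $\int_{T_{n}(k)}F=\int_{T^{\circ}}\!\int_{U(k)}F(v\widehat s)\,dv\,ds$ relative to a measurable section $s\mapsto\widehat s$. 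With these choices $\rho_{*}F(t)=\int_{U(k)}F(u\widehat t)\,du$ for any preimage $\widehat t$ of $t$.

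The computation would then run as follows. Fix $t\in T^{\circ}$ and a preimage $\widehat t$. Expanding $(J^{\rho_{T}}\star\widetilde\phi^{\vee})(t)=\int_{T^{\circ}}J^{\rho_{T}}(s)\,\widetilde\phi(t^{-1}s)\,ds$ via $(6.3)$ and $(6.5)$ produces an iterated integral over two copies of $U(k)$ and over $T^{\circ}$; a fibrewise Haar translation in $U(k)$ removes the dependence on the choice of section, and the remaining $U(k)\times T^{\circ}$ integral collapses through $(v,s)\mapsto v\widehat s$ to
\[ \int_{T_{n}(k)}\!\int_{U(k)}\psi\!\big(\textstyle\sum_{i}y_{i}\big)\,\phi\big(w\widehat{t}^{-1}y\big)\,dw\,dy. \]
Now substitute $y=\widehat t\,z$; since $\textstyle\sum_{i}(\widehat t\,z)_{i}=\textstyle\sum_{i}\widehat t_{i}z_{i}$ this turns the character into $\psi(\sum_{i}\widehat t_{i}z_{i})$ and the argument of $\phi$ into $wz$. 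Interchanging the two integrations and then making the substitution $z\mapsto w^{-1}z$ inside the $T_{n}(k)$‑integral leaves
\[ \int_{U(k)}\Big(\int_{T_{n}(k)}\phi(z)\,\psi\big(\textstyle\sum_{i}\widehat t_{i}w_{i}^{-1}z_{i}\big)\,dz\Big)\,dw=\int_{U(k)}\widehat\phi\big(\widehat{t}w^{-1}\big)\,dw, \]
where the inner integral over $T_{n}(k)$ has been extended to $k^{n}$ (the complement being null) in order to recognise $\widehat\phi$. A final $w\mapsto w^{-1}$ in $U(k)$ identifies this with $\int_{U(k)}\widehat\phi(w\widehat t)\,dw=\rho_{*}(\widehat\phi)(t)$, which is the claim. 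The only step here that is not pure bookkeeping is the covariance $\sum_{i}(\widehat t\,z)_{i}x_{i}=\sum_{i}z_{i}(\widehat t\,x)_{i}$ of the trace pairing, which is what permits the translation by $\widehat t$ to be moved between the two arguments of $\psi$, and hence transferred from the ``source'' torus $T_{n}$ to the ``target'' torus $T$.

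The main obstacle will be justifying the Fubini interchanges. By Proposition~6 of \cite{ngo2020hankel} the integral defining $J^{\rho_{T}}$ converges only as a principal value, and $\widetilde\phi=\rho_{*}\phi$ need not be compactly supported on $T(k)$, so absolute convergence is not available for a general $\phi$. I would circumvent this using linearity together with the decomposition $\sS^{\mathrm{std}}(T_{n})=C_{c}^{\infty}(T_{n}(k))+J^{\mathrm{std}}(C_{c}^{\infty}(T_{n}(k)))$: for $\phi\in C_{c}^{\infty}(T_{n}(k))$ supported away from the coordinate hyperplanes one has $\widehat\phi$ Schwartz on $k^{n}$ and $\widetilde\phi\in C_{c}^{\infty}(T^{\circ})$, all the integrals above converge absolutely, and the computation is unconditional; the general case then follows by approximating an arbitrary $\phi$ by such functions and invoking continuity of $J^{\mathrm{std}}$, of $\rho_{*}$, and of the regularised convolution by $J^{\rho_{T}}$ — equivalently, by checking directly that Ngo's principal‑value prescription is preserved under each substitution used above. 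Everything else reduces to careful bookkeeping with the chosen Haar measures and the disintegration along $\widetilde{\rho_T}$.
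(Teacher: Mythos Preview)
Your proof is correct and follows essentially the same approach as the paper: both arguments unfold the definitions of $\rho_*$, $J^{\mathrm{std}}$, $J^{\rho_T}$, and $\widetilde\phi$ and then perform a short chain of Fubini interchanges and Haar substitutions on $T_n(k)$ and $U(k)$, the only difference being that the paper computes from $\rho_*(J^{\mathrm{std}}\star\phi^{\vee})$ toward $J^{\rho_T}\star\widetilde\phi^{\vee}$ while you go in the opposite direction. Your added discussion of sections, the finite-index image $T^{\circ}$, and the convergence issues (reducing to $\phi\in C_c^{\infty}(T_n(k))$ and invoking Ngo's principal-value regularisation) is more careful than the paper's purely formal computation in $(6.6)$, but the underlying manipulation is the same.
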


\begin{proof}
By definition, for $t \in T$,

\begin{equation}
\begin{aligned}
\rho_*(J^{\text{std}} \star \phi^{\vee})(t) &= \int_{U(k)} (J^{\text{std}}\star \phi^{\vee})(ut)du\\
&= \int_{U(k)} ( \int_{T_n(k)} h_{\psi}(ut\tilde{t})\phi(\tilde{t})d\tilde{t})du\\
&= \int_{T_n(k)} h_{\psi}(t \tilde{t}) ( \int_{U(k)} \phi(u^{-1} \tilde{t})du)d\tilde{t}\\
&= \int_{T_n(k)} h_{\psi}(t \tilde{t}) \tilde{\phi}(\tilde{t})d\tilde{t}\\
&= \int_{T(k)} ( \int_{U(k)} h_{\psi}(ut\tilde{t}) du) \tilde{\phi}(u \tilde{t}) d\tilde{t}\\
&= \int_{T(k)} ( \int_{U(k)} h_{\psi}(u \tilde{t})du) \tilde{\phi}(t^{-1} \tilde{t}) d\tilde{t} \\
&= (J^{\rho_T} \star \tilde{\phi}^{\vee})(t),\\
\end{aligned}
\end{equation}
using $T=T_n/U$ in (6.6), then the lemma follows.
\end{proof}

The push-forward $\rho_*$ can be restricted to 

\[
\bC[\hT_n]^{W_n} \cong \sH(T_n(k),T_n(\sO_k))^{W_n}
\]
leading to

\begin{equation}   %5.47
\begin{CD}
\sH(T_n(k),T_n(\sO_k))^{W_n} @>\rho_*>> \sH(T(k),T(\sO_k))^{W}\\
@VJ^{\text{std}}VV                 @VVJ^{\rho_T}V \\
\sH(T_n(k),T_n(\sO))^{W_n}  @>{\rho_*}>> \sH(T(k),T(\sO_k))^{W}, 
\end{CD}
\end{equation}
\end{section} 
where $W_n$ is the Weyl group $W_n=W(Gl_n,T_n)$ ,$W:=W(G,T)$ and $\sH$ denotes the corresponding Hecke algebra. Identifying, via the corresponding Satake isomorphisms

\[
\sH^{\circ}(GL_n(k)) := \sH(GL_n(k),GL_n(\sO_k)) \cong \sH(T_n(k),T_n(\sO_k))^{W_n}
\]
and
\[
\sH(G(k),G(\sO_k)) \cong \sH(T(k),T(\sO_k))^W
\]

\begin{equation}   %5.47
\begin{CD}
\sH^{\circ}(GL_n(k))  @>Sat>> \sH(T_n(k),T_n(\sO_k))^{W_n} @>\rho_*>> \sH(T(k),T(\sO_k))^{W} @>Sat^{-1}>> \sH(G(k),G(\sO_k)) \\
@VJ^{\text{std}}VV                 @VVJ^{std}V  @VVJ^{\rho_T}V        @VJ^{\rho}VV\\
\sH^{\circ}(GL_n(k))  @>{Sat}>> \sH(T_n(k),T_n(\sO_k))^{W_n} @>\rho_*>> \sH(T(k),T(\sO_k))^{W} @>Sat^{-1} >> \sH(G(k),G(\sO_k)),
\end{CD}
\end{equation}
in which $J^{\rho_T}$ defines the Fourier transform

\[
J^{\rho}: \sS(G) \to \sS(G)
\]
restricted to $\sH(G(k),G(\sO_k))$. Consequently, at least on $\sH(G(k),G(\sO_k))$, the Fourier transform $J^{\rho}$ and $J^{\rho_T}$ commute with the Harish-Chandra transform. 

\section{The case of standard $L$-functions for classical groups; the doubling method}

We conclude by addressing multiplicativity in the case of standard $L$-functions, twisted by a character, for classical groups as developed by Piatetski-Shapiro and Rallis, which has been addressed further by a number of other authors \cite{braverman2002normalized,jiang2020harmonic,li2018zeta,shahidi2016generalized} within our present context. We refer to the local theory developed by Lapid and Rallis. We will be brief and only mention the relevant statements. 

The $\rho$-Harish-Chandra transform is the one given in Proposition $1$ of \cite{lapid2005local} as $\Psi(\omega,s)$ with notation as in \cite{lapid2005local}. Our commutativity equation ($5.4$) in this case is equation ($17$) in Lemma $9$ of \cite{lapid2005local} in which $J^{\rho}=M^{*}_{\nu}(\omega,A,s)$, a normalized intertwining operator, while $J^{\rho_L}$ acts as the operator induced from $M^{*}_{\mathcal{W}}(\omega,B,s)$ with notation as in \cite{lapid2005local} in the context of doubling construction, or simply put $J^{\rho_L} = M^{*}_{\mathcal
{W}}(\omega,B,s)$.

\bibliographystyle{alpha-abbrvsort}
	\bibliography{ref}
	\nocite{*}

	\bigskip 
\noindent Purdue University\\
West Lafayette, IN 47907\\
shahidi@math.purdue.edu\\
wsokursk@purdue.edu
\end{document}